 \newtheorem{theorem}{Theorem}
 \newtheorem{lemma}[theorem]{Lemma}
\theoremstyle{definition}
 \newtheorem{definition}[theorem]{Definition}
\theoremstyle{remark}
\begin{document}
\title[Double fibration transform]{Microlocal analysis of double fibration transforms with conjugate points}
\author[H.~Chihara]{Hiroyuki Chihara}
\address{College of Education, University of the Ryukyus, Nishihara, Okinawa 903-0213, Japan}
\email{hc@trevally.net}
\thanks{Supported by the JSPS Grant-in-Aid for Scientific Research \#23K03186.}
\subjclass[2020]{Primary 58J40, Secondary 53C65}
\keywords{double fibration transform, Fourier integral operator, Bolker condition, conjugate point}
\begin{abstract}
We study the structure of normal operators of double fibration transforms with conjugate points. Examples of double fibration transforms include  Radon transforms, $d$-plane transforms on the Euclidean space, geodesic X-ray transforms, light-ray transforms, and ray transforms defined by null bicharacteristics associated with real principal type operators. We show that, under certain stable conditions on the distribution of conjugate points, the normal operator splits into an elliptic pseudodifferential operator and several Fourier integral operators, depending on the degree of the conjugate points. These problems were first studied for geodesic X-ray transforms by Stefanov and Uhlmann (Analysis \& PDE, {\bf 5} (2012), pp.219--260). After that Holman and Uhlmann (Journal of Differential Geometry, {\bf 108} (2018), pp.459--494) proved refined results according to the degree of regular conjugate points.  
\end{abstract}
\maketitle
\section{Introduction}
\label{section:introduction}
We study the structure of the normal operators of double fibration transforms using microlocal analysis. In particular, when conjugate points exist, we split the normal operators into several parts according to the degree of conjugate points. We begin with the $d$-plane transforms on the Euclidean space to explain the background and our motivation.
\par
We now introduce the $d$-plane transform and state the inversion formula following Helgason's celebrated textbook \cite[Chapter~1, Section~6]{Helgason}. Let $\mathbb{R}^n$ with $n\geqq2$ be the Euclidean space, and let $d$ be a positive integer strictly less than $n$. Denote by $G_{d,n}$ and $G(d,n)$ the Grassmannian and the affine Grassmannian respectively, that is, $G_{d,n}$ is the set of all $d$-dimensional vector subspaces of $\mathbb{R}^n$ and $G(d,n)$ is the set of all $d$-dimensional affine subspaces in $\mathbb{R}^n$. For any $\sigma \in G_{d,n}$, we have the orthogonal direct sum 
$\mathbb{R}^n = \sigma\oplus\sigma^\perp$, 
where $\sigma^\perp$ is the orthogonal complement of $\sigma$ in $\mathbb{R}^n$. 
For any fixed $\sigma \in G_{d,n}$, 
we choose a coordinate system of $\mathbb{R}^n$ such as 
$x=x^\prime+x^{\prime\prime} \in \sigma\oplus\sigma^\perp$. 
The affine Grassmannian $G(d,n)$ is given by 
$$
G(d,n)
=
\{\sigma+x^{\prime\prime}: \sigma \in G_{d,n}, x^{\prime\prime} \in \sigma^\perp\}. 
$$    
The $d$-plane transform $R_df(\sigma+x^{\prime\prime})$ of an appropriate function $f(x)$ on $\mathbb{R}^n$ is defined by 
$$
R_df(\sigma+x^{\prime\prime})
:= 
\int_\sigma f(x^\prime+x^{\prime\prime}) dx^\prime,
$$
where $dx^\prime$ is the Lebesgue measure on $\sigma$. 
The formal adjoint of $R_d$ of a continuous function $\varphi$ on $G(d,n)$ 
is explicitly given by  
$$
R_d^\ast\varphi(x)
:=
\frac{1}{C(d,n)}
\int_{O(n)}
\varphi(x+k\cdot\sigma)
dk, 
$$
where $C(d,n)=(4\pi)^d\Gamma(n/2)/\Gamma((n-1)/2)$, 
$\Gamma(\cdot)$ is the gamma function, 
$O(n)$ is the orthogonal group, 
$dk$ is the normalized measure which is invariant under rotations, 
and $\sigma \in G_{d,n}$ is arbitrary. 
The inversion formula for $R_d$ is given as follows.
\begin{theorem}[{\cite[Chapter~1, Theorem~6.2]{Helgason}}]
\label{theorem:siggi}
For $f(x)=\mathcal{O}(\langle{x}\rangle^{-d-\varepsilon})$ on $\mathbb{R}^n$ for some $\varepsilon>0$, we have 
$$
f=(-\Delta_{\mathbb{R}^n})^{d/2}R_d^\ast{R_d}f,
$$
where $x=(x_1,\dotsc,x_n)\in\mathbb{R}^n$, 
$\langle{x}\rangle:=\sqrt{1+x_1^2+\dotsb+x_n^2}$, 
$-\Delta_{\mathbb{R}^n}:=-\partial_{x_1}^2-\dotsb-\partial_{x_n}^2$.  
\end{theorem}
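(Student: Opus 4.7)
The plan is to derive the inversion formula by turning $R_d^\ast R_d$ into a radial Fourier multiplier. The first step is the Fourier slice theorem: fixing $\sigma \in G_{d,n}$ and $\xi^{\prime\prime} \in \sigma^\perp$ viewed as a vector in $\mathbb{R}^n$, the decay hypothesis on $f$ justifies Fubini, and the orthogonality $x^\prime \cdot \xi^{\prime\prime} = 0$ for $x^\prime \in \sigma$ yields
$$
\int_{\sigma^\perp} R_d f(\sigma + x^{\prime\prime}) e^{-i x^{\prime\prime} \cdot \xi^{\prime\prime}} dx^{\prime\prime}
=
\int_{\mathbb{R}^n} f(x) e^{-i x \cdot \xi^{\prime\prime}} dx
=
\hat f(\xi^{\prime\prime}).
$$

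I would then unfold $R_d^\ast R_d$. Fixing a reference $\sigma_0 \in G_{d,n}$ and changing variables in the definition of $R_d^\ast$ exhibits $R_d^\ast R_d f$ as the convolution $K \ast f$ with the rotation-invariant kernel
$$
K = \frac{1}{C(d,n)} \int_{O(n)} \delta_{k \sigma_0}\, dk,
$$
where $\delta_{k \sigma_0}$ denotes the $d$-dimensional surface measure on $k \sigma_0 \subset \mathbb{R}^n$. Each $\delta_{k\sigma_0}$ is homogeneous of degree $d-n$, so $K$ is too, and hence $\widehat K$ is homogeneous of degree $-d$. Combined with $O(n)$-invariance, this forces $\widehat K(\xi)=c_{d,n}|\xi|^{-d}$ for some explicit constant.

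To pin down $c_{d,n}$ I would either test against a standard Gaussian, or reduce the $dk$-integral to a Beta-type integral on the Grassmannian $O(n)/(O(d)\times O(n-d))$. Matching with the given normalization $C(d,n)=(4\pi)^d\Gamma(n/2)/\Gamma((n-1)/2)$ should produce $c_{d,n}=1$. Applying $(-\Delta_{\mathbb{R}^n})^{d/2}$ then multiplies the symbol by $|\xi|^d$, giving $\widehat{(-\Delta_{\mathbb{R}^n})^{d/2} R_d^\ast R_d f}(\xi)=\hat f(\xi)$, and Fourier inversion delivers the theorem.

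The hard part is the constant. The symmetry argument cheaply yields the \emph{shape} $c_{d,n}|\xi|^{-d}$, but extracting its precise value through either the Gaussian test or the Grassmannian integration requires careful bookkeeping, and this bookkeeping is what dictates the exact form of $C(d,n)$.
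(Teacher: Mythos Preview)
The paper does not prove this theorem. It is quoted from Helgason's textbook as motivational background in the introduction, and no argument for it appears anywhere in the paper; the paper's own contributions concern the microlocal structure of $\mathcal{R}^\ast\mathcal{R}$ for general double fibration transforms (Theorems~\ref{theorem:normal1} and \ref{theorem:normal2}), not the explicit Euclidean inversion formula. So there is nothing in the paper to compare your attempt against.

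For what it is worth, your outline follows the standard Fourier-analytic route and is structurally sound: the projection--slice identity, the representation of $R_d^\ast R_d f$ as convolution with a rotation-invariant kernel, and the homogeneity argument forcing $\widehat K(\xi)=c\,|\xi|^{-d}$ are all correct, and your acknowledgment that pinning down the constant is the delicate step is honest. One caution: the decay hypothesis $f=\mathcal{O}(\langle x\rangle^{-d-\varepsilon})$ guarantees that $R_d f$ is well defined but does not by itself place $f$ in $L^1(\mathbb{R}^n)$ or $L^2(\mathbb{R}^n)$, so the Fourier-side identities strictly require either a density argument from Schwartz functions or a tempered-distribution interpretation that you have not made explicit.
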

We see that Theorem~\ref{theorem:siggi} says that the normal operator 
$R_d^\ast{R_d}$ for $R_d$ is an elliptic pseudodifferential operator $(-\Delta_{\mathbb{R}^n})^{-d/2}$ 
and that $R_d^\ast{R_d}$ has a local parametrix everywhere on $\mathbb{R}^n$. 
In other words, the inversion formula for $R_d$ holds 
since the normal operator for $R_d$ is an elliptic pseudodifferential operator. 
More precisely, 
the normal operator for $R_d$ is an elliptic pseudodifferential operator 
since the Euclidean space $\mathbb{R}^n$ has 
no conjugate points as a Riemannian manifold. 
Unfortunately, however, it might be impossible to notice this fact 
if one studies such a problem only on the Euclidean space. 
\par
We consider the normal operator of the geodesic X-ray transform on Riemannian manifolds. The motivation of the study of the normal operator comes from the study of the invertibility of the X-ray transform. We should refer three pioneering works \cite{PlamenGunther1,PlamenGunther2,PlamenGunther3} by Stefanov and Uhlmann. In \cite{PlamenGunther1} they studied the Geodesic X-ray transform for symmetric 2-tensors on simple Riemannian manifolds, proved that the normal operator is a system of pseudodifferential operators of order $-1$, obtained its principal symbol, its kernel and its parametrix, and established a stability estimate. 
In \cite{PlamenGunther2} they studied the Geodesic X-ray transform for symmetric 2-tensors on simple Riemannian manifolds in the real-analytic setting, and proved that the normal operator is s-injective, that is, the normal operator is injective modulo potential tensor fields. For this purpose, they proved that the normal operator is a system of analytic pseudodifferential operators of order $-1$, and construct its analytic parametrix. 
\par
In \cite{PlamenGunther3} they studied the Geodesic X-ray transform for scalar functions on Riemannian manifolds with fold caustics. More precisely, let $v_0$ be a fold conjugate vector at $p_0$, and let $q_0=\exp_{p_0}(t_0v_0)$ ($t_0\in\mathbb{R}$) be the conjugate point to $p_0$. They studied the normal operator in a small neighborbood of the geodesic segment joining $p_0$ and $q_0$, and proved that the normal operator $N$ admits the decomposition $N=A+F$, where $A$ is an elliptic pseudodifferential operator of order $-1$, $F$ is a Fourier integral operator of order $-n/2$, and $n$ is the dimension of the Riemannian manifold. The definition of fold caustics is as follows. 
A conjugate vector $v_0$ is of fold type if $D\exp_{p_0}(v_0)$ has corank one and
$\det\bigl(D\exp_{p_0}(v)\bigr)$ vanishes simply at $v=v_0$.  
Equivalently, the kernel of $D\exp_{p_0}(v_0)$ is one-dimensional and 
the second derivative of $\exp_{p_0}$ in the transverse direction is nondegenerate. 
In this case the image of the conjugate locus under $\exp_{p_0}$ forms a fold caustic. 
\par
Later Holman and Uhlmann studied the Geodesic X-ray transform for scalar functions on Riemannian manifolds with conjugate points in \cite{HolmanUhlmann}. The results of \cite{HolmanUhlmann} show that if all the conjugate points are regular, then the normal operator becomes a sum of an elliptic pseudodifferential operator of order $-1$ and and several Fourier integral operators according to the degree of conjugate points. See e.g., \cite{HolmanUhlmann,Warner} for the definition of regular and singular conjugate points. 
\par
Let $(M,g)$ be an $n$-dimensional compact non-trapping Riemannian manifold with strictly convex boundary. We say that a Riemannian manifold $(M,g)$ is nontrapping if all geodesics exit $M^\text{int}$ in finite time. Denote by $SM$ the unit tangent sphere bundle over $M$, and by $\nu(x)$ the unit outer normal vector at $x\in\partial{M}$. Set 
$$
\partial_-SM
:=
\{(x,v) \in SM : x\in\partial{M}, \langle{v,\nu(x)}\rangle<0\},
$$
which is the set of all the inward unit tangent vectors on $\partial{M}$.  
For any normal geodesic $\gamma$ there exists a unique element $(x,v) \in \partial_-SM$ 
such that $\bigl(\gamma(0),\dot{\gamma}(0)\bigr)=(x,v)$. 
The exit time of $\gamma$ is denoted by $\tau(x,v)$. 
We mean a geodesic with unit speed by a phrase ``normal geodesic''. 
$\partial_-SM$ can be identified with the space of all the normal geodesics on $M$. 
The geodesic X-ray transform $\mathcal{X}f(\gamma)=\mathcal{X}f(x,v)$ 
of $f \in \mathcal{D}(M^\text{int},\Omega_{M^\text{int}}^{1/2})$ is defined by 
$$
\mathcal{X}f(\gamma)=\mathcal{X}f(x,v)
:=
\left(
\int_0^{\tau(x,v)}
w\bigl(x,v,\gamma(t)\bigr)
\frac{f}{\lvert{dM}\rvert^{1/2}}\bigl(\gamma(t)\bigr)
dt
\right)
\lvert{d\partial_-SM(x,v)}\rvert^{1/2},
$$
where $w \in C^\infty(\partial_-SM \times M^\text{int})$ is a nowhere vanishing weight function, 
$dM$ is the Riemannian measure induced by the metric $g$, 
$\mathcal{D}(M^\text{int},\Omega_{M^\text{int}}^{1/2})$ 
is the space of all compactly supported smooth half densities on $M^\text{int}$, 
and $d\partial_-SM$ is the Riemannian measure induced by the Sasaki metric. 
We now state the conjugate points on $M$ 
in a seemingly different form as follows. 
Let $(x,v) \in \partial_-SM$, let $\gamma$ be the corresponding normal geodesic, 
and let  $y_0=\gamma(t_0)$ and $y_1=\gamma(t_1)$ with $t_0\ne{t_1}$. 
We say that $\bigl((x,v\bigr),y_0,y_1)$ is a conjugate triplet of degree $k=1,\dotsc,n-1$
if $y_1$ is a conjugate point of $y_0$ along $\gamma$ of degree $k$. 
Moreover we say that $\bigl((x,v\bigr),y_0,y_1)$ 
is a {\it regular} conjugate triplet of degree $k$ if there exist 
a neighborhood $V$ of $(x,v)$ in $\partial_-SM$, 
a neighborhood $U_0$ of $y_0$ in $M^\text{int}$, 
and 
a neighborhood $U_1$ of $y_0$ in $M^\text{int}$, 
such that every conjugate triplet 
$\bigl((x^\prime,v^\prime\bigr),y_0^\prime,y_1^\prime) \in V \times U_0 \times U_1$ 
is also of degree $k$. 
The main results of Holman and Uhlmann \cite{HolmanUhlmann} are the following. 
\begin{theorem}[{\cite[Theorem~4 in page 481]{HolmanUhlmann}}]
\label{theorem:hkust}
Suppose that all the conjugate triplets are regular. 
Then the normal operator $\mathcal{X}^\ast\mathcal{X}$ is decomposed as 
$$
\mathcal{X}^\ast\mathcal{X}
=
P
+
\sum_{k=1}^{n-1}
\sum_{l=1}^{M_k}
A_{k,l},
$$
where $P$ is a pseudodifferential operator of order $-1$, 
and for $k=1,\dots,n-1$ and $l=1,\dots,M_k$ 
$$
A_{k,l}
\in
\mathcal{I}^{-(n+1-k)/2}
(M^\text{int}{\times}M^\text{int},
C_{A_{k,l}}^\prime,
\Omega^{1/2}_{M^\text{int}{\times}M^\text{int}}), 
$$
$\mathcal{I}^{-(n+1-k)/2}(M^\text{int}{\times}M^\text{int},C_{A_{k,l}}^\prime,\Omega^{1/2}_{M^\text{int}{\times}M^\text{int}})$ 
is the standard notation of the class of Lagrangian distributions 
{\rm (}See, e.g., {\rm \cite{Hoermander4})},  
$C_{A_{k,l}}$ is the canonical relation of $A_{k,l}$ and does not contain a diagonal part of $T^\ast{M}{\times}T^\ast{M}$, 
$k (=1,\dotsc,n-1)$ describe the degree of the conjugate triplets, 
$l (=1,\dotsc,M_k)$ correspond to connected components of the set of regular conjugate triplets of degree $k$. 
\end{theorem}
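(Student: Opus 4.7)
The plan is to realize the weighted geodesic X-ray transform $\mathcal{X}$ as a Fourier integral operator whose canonical relation $C_\mathcal{X}\subset T^*\partial_-SM\times T^*M^{\text{int}}$ is the twisted conormal bundle of the incidence relation
$$
Z=\{((x,v),y)\in\partial_-SM\times M^{\text{int}}: y=\gamma_{x,v}(t),\ 0<t<\tau(x,v)\},
$$
and then to analyze the composed canonical relation $C=C_\mathcal{X}^t\circ C_\mathcal{X}$ stratum by stratum, using the clean-intersection composition calculus for Fourier integral operators \cite{Hoermander4}. The regularity condition on conjugate triplets is precisely the hypothesis that makes each stratum of $C$ a smooth embedded Lagrangian with constant excess, so that the calculus applies piecewise.

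First I would verify that the diagonal part of $C$, arising from pairs $(y_0,\eta_0;y_1,\eta_1)$ with $y_0=y_1$, coincides with the twisted conormal of $\Delta_{M^{\text{int}}}$, and that the Bolker condition for the left projection from $C_\mathcal{X}$ holds along this part, since the local failure of Bolker injectivity is exactly what produces the off-diagonal conjugate components. A microlocal cutoff near the diagonal then gives, by the standard non-conjugate argument, the elliptic pseudodifferential summand $P$ of order $-1$. The complement of the diagonal in $C$ consists of pairs $y_0\ne y_1$ on a common geodesic and lies in a set disjoint from the diagonal.

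Second, at a regular conjugate triplet $((x,v),y_0,y_1)$ of degree $k$, the fiber of the projection from the fiber product $C_\mathcal{X}^t\times_{T^*\partial_-SM}C_\mathcal{X}$ to a base point of $C$ is modeled on the conjugate Jacobi-field space along the connecting geodesic and so has dimension $k$. The regularity hypothesis states exactly that this dimension is locally constant, which is cleanness of the defining intersection with constant excess $k$; each connected component $C_{A_{k,l}}$ of the regular degree-$k$ locus is therefore a smooth embedded Lagrangian disjoint from the diagonal, and a microlocal cutoff isolating it extracts the summand $A_{k,l}$. The clean composition formula, combined with the half-density order chosen for $\mathcal{X}$, places $A_{k,l}$ in the stated class $\mathcal{I}^{-(n+1-k)/2}(M^{\text{int}}\times M^{\text{int}},C_{A_{k,l}}',\Omega^{1/2}_{M^{\text{int}}\times M^{\text{int}}})$.

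The main obstacle is translating the differential-geometric notion of regular conjugate triplet into the symplectic-geometric notion of clean intersection with constant excess equal to the degree. This requires a careful Jacobi-field analysis along the common geodesic and a check that the symplectic transport of covectors is compatible with the fiber products entering the clean composition theorem. Once this identification is in place, the decomposition and the orders follow by assembling the stratum contributions through the standard Fourier integral operator calculus.
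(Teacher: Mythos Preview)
Your overall strategy is correct and is essentially the one the paper (following Holman--Uhlmann) carries out: view $\mathcal{X}$ as an elliptic FIO with canonical relation the twisted conormal of the point--geodesic incidence relation, use a partition of unity on the base (the paper cuts off in $\mathcal{G}\times X\times X$, not microlocally) to separate the diagonal from each regular conjugate component, obtain $P$ from the diagonal piece via the Bolker argument, and treat each $C_{R,k,l}$ via the clean composition theorem.

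There is, however, a genuine slip in your excess count. In the geodesic X-ray setting one has $N=\dim(\partial_-SM)=2n-2$, $n'=1$, $n''=n-1$, and the paper's computation gives
\[
e=N-2n+n'+k=(2n-2)-2n+1+k=k-1,
\]
not $k$. Equivalently $\dim E_{k,\alpha}=\dim C_{R,k}+\dim(N^\ast_zH_{y_0}\cap N^\ast_zH_{y_1})=(N+n')+k=2n-1+k$, while the composed canonical relation is a $2n$-dimensional Lagrangian, so the fibre of $\pi_{E_{k,\alpha}}$ has dimension $k-1$. Your identification of this fibre with the $k$-dimensional space of Jacobi fields vanishing at both endpoints is therefore off by one; what actually parametrizes the fibre is the family of nearby geodesics through both points \emph{subject to the covector matching}, which removes one dimension. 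Had you run the order bookkeeping with excess $k$, you would get $-n/2+k/2=-(n-k)/2$ rather than the stated $-(n+1-k)/2$. With the correct excess $k-1$ and the order $-n/4$ for $\mathcal{X}$ one obtains $-n/2+(k-1)/2=-(n+1-k)/2$, as required. So the plan is right, but the Jacobi-field heuristic for the fibre needs to be replaced by the precise dimension count above.
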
 
Theorem~\ref{theorem:hkust} shows that if there are conjugate points and all the conjugate triplets are regular, the normal operator $\mathcal{X}^\ast\mathcal{X}$ contains Fourier integral operators which are essentially different from elliptic pseudodifferential operators. The highest order term of the Fourier integral operator part is the term of $k=n-1$:
$$
\sum_{l=1}^{M_{n-1}}
A_{n-1,l}, 
\quad
A_{n-1,l}
\in
\mathcal{I}^{-1}
(M^\text{int}{\times}M^\text{int},
C_{A_{n-1,l}}^\prime,
\Omega^{1/2}_{M^\text{int}{\times}M^\text{int}}).
$$
The order of this part is $-1$ and the same as that of $\mathcal{X}^\ast\mathcal{X}$. So we cannot expect the existence of a parametrix of the normal operator. What happens to the normal operators if conjugate points arise? Actually so-called the Bolker condition fails to hold and several Fourier integral operators arise in the normal operator when there are conjugate points. 
\par
In this paper, we show Theorem~\ref{theorem:hkust} in the context of double fibration transforms, which are generalizations of integral transforms arising in integral geometry and the microlocal analysis of various real principal type pseudodifferential operators. The double fibration approach to integral geometry was initiated by Helgason \cite{Helgason}. See also Gelfand, Graev and Shapiro \cite{GelfandGraevShapiro}, Guillemin and Sternberg \cite{GuilleminSternberg}, Guillemin \cite{Guillemin}, Quinto \cite{Quinto}, and some other references. We now state the setting of the double fibration in this paper following the recent interesting and important paper \cite{MazzucchelliSaloTzou} by Mazzucchelli, Salo and Tzou. 
\par
Let $\mathcal{G}$ and $X$ be oriented smooth manifolds without boundaries. 
Set $N:=\dim(\mathcal{G})$ and $n:=\dim(X)$ for short. 
Denote by $d\mathcal{G}$ and $dX$ the orientation form of $\mathcal{G}$ 
and the orientation form of $X$ respectively. 
Let $Z$ be an oriented embedded submanifold of $\mathcal{G}{\times}X$, 
and denote the orientation form on $Z$ by $dZ$.    
Assume that $N+n > \dim(Z) > N \geqq n \geqq 2$, and 
set $n^\prime:=\dim(Z)-N$ and $n^{\prime\prime}:=n-n^\prime$. 
Then $\dim(Z)=N+n^\prime$, $n=n^\prime+n^{\prime\prime}$ and $n^\prime,n^{\prime\prime}=1,\dotsc,n-1$. 
$$
\begin{diagram}
\node[2]{Z} \arrow{sw,t}{\pi_\mathcal{G}} \arrow{se,t}{\pi_X}
\\
\node{\mathcal{G}} \node[2]{X}  
\end{diagram}
$$
\begin{center}
Fig.1\ Double fibration. 
\end{center}
We assume that $Z$ is a double fibration, that is, 
the natural projections
$\pi_\mathcal{G}:Z{\rightarrow}\mathcal{G}$ and 
$\pi_X:Z{\rightarrow}X$ are submersions respectively. 
Then $G_z:=\pi_X\circ\pi_\mathcal{G}^{-1}(z)$ becomes an 
$n^\prime$-dimensional embedded submanifold of X for any $z \in \mathcal{G}$, 
and $H_x:=\pi_\mathcal{G}\circ\pi_X^{-1}(x)$ forms an 
$(N-n^{\prime\prime})$-dimensional embedded submanifold of $\mathcal{G}$ for any $x \in X$. 
\par
We now state the induced orientation forms on $G_z$ and $H_x$. 
Fix arbitrary $(z,x) \in Z$, 
and let $\{v_1,\dotsc,v_n\}$ and $\{w_1,\dotsc,w_N\}$ be bases of 
$T_xX$ and $T_z\mathcal{G}$ respectively such that 
$$
T_{(z,x)}Z
=
\operatorname{span}\langle{v_1,\dotsc,v_{n^\prime},w_1,\dotsc,w_N}\rangle
=
\operatorname{span}\langle{v_1,\dotsc,v_n,w_1,\dotsc,w_{N-n^{\prime\prime}}}\rangle.
$$
The induced orientation forms $dG_z$ on $G_z$ and $dH_x$ on $H_x$ are given by 
\begin{align*}
  dG_z\bigl(d\pi_X(v_1),\dotsc,d\pi_X(v_{n^\prime})\bigr) :
& =
  dZ_{\pi_\mathcal{G}^{-1}(z)}(v_1,\dotsc,v_{n^\prime})
\\
& =
  \frac{dZ(v_1,\dotsc,v_{n^\prime},w_1,\dotsc,w_N)}{d\mathcal{G}\bigl(d\pi_\mathcal{G}(w_1),\dotsc,d\pi_\mathcal{G}(w_N)\bigr)},
\\ 
  dH_x\bigl(d\pi_\mathcal{G}(w_1),\dotsc,d\pi_\mathcal{G}(w_{N-n^{\prime\prime}})\bigr) :
& =
  dZ_{\pi_X^{-1}(x)}(w_1,\dotsc,w_{N-n^{\prime\prime}})
\\
& =
  \frac{dZ(v_1,\dotsc,v_n,w_1,\dotsc,w_{N-n^{\prime\prime}})}{dX\bigl(d\pi_X(v_1),\dotsc,d\pi_X(v_n)\bigr)}.
\end{align*}
\par
We state the definition of double fibration transform 
associated with the double fibration $Z$, 
its adjoint, and their mapping properties. 
We denote by $\mathscr{D}(X,\Omega_X^{1/2})$ 
the space of compactly supported smooth half densities 
on X equipped with the standard inductive limit topology, 
and by $\mathscr{E}(X,\Omega_X^{1/2})$ the space of smooth half densities 
on X equipped with the standard Fr\'echet space topology. 
Their topological duals are denoted by $\mathscr{D}^\prime(X,\Omega_X^{1/2})$ 
and by $\mathscr{E}^\prime(X,\Omega_X^{1/2})$ respectively. 
Further 
$\mathscr{D}(\mathcal{G},\Omega_\mathcal{G}^{1/2})$, 
$\mathscr{D}^\prime(\mathcal{G},\Omega_\mathcal{G}^{1/2})$, 
$\mathscr{E}(\mathcal{G},\Omega_\mathcal{G}^{1/2})$, 
and 
$\mathscr{E}^\prime(\mathcal{G},\Omega_\mathcal{G}^{1/2})$ 
are defined similarly. 
Suppose that a weight function $\kappa(z,x) \in C^\infty(\mathcal{G}{\times}X)$ is nowhere vanishing. 
A double fibration transform $\mathcal{R}$ associated with the double fibration $Z$ is defined by 
$$
\mathcal{R}f(z)
:=
\left(
\int_{G_z}
\kappa(z,x)
\frac{f}{\lvert{dX}\rvert^{1/2}}(x)
dG_z(x)
\right)
\lvert{d\mathcal{G}(z)}\rvert^{1/2}
$$
for $f \in \mathscr{D}(X,\Omega_X^{1/2})$. 
The adjoint $\mathcal{R}^\ast$ is given by 
$$
\mathcal{R}^\ast{u(x)}
=
\left(
\int_{H_x}
\overline{\kappa(z,x)}
\frac{u}{\lvert{d\mathcal{G}}\rvert^{1/2}}(z)
dH_x(z)
\right)
\lvert{dX(x)}\rvert^{1/2}
$$
for $u \in \mathscr{D}(\mathcal{G},\Omega_\mathcal{G}^{1/2})$. 
Then we deduce that 
$$
\mathcal{R}:\mathscr{D}(X,\Omega_X^{1/2}) \rightarrow \mathscr{E}(\mathcal{G},\Omega_\mathcal{G}^{1/2}), 
\quad
\mathcal{R}^\ast:\mathscr{D}(\mathcal{G},\Omega_\mathcal{G}^{1/2}) \rightarrow \mathscr{E}(X,\Omega_X^{1/2}), 
$$
are continuous linear mappings, and so are  
$$
\mathcal{R}:\mathscr{E}^\prime(X,\Omega_X^{1/2}) \rightarrow \mathscr{D}^\prime(\mathcal{G},\Omega_\mathcal{G}^{1/2}), 
\quad
\mathcal{R}^\ast:\mathscr{E}^\prime(\mathcal{G},\Omega_\mathcal{G}^{1/2}) \rightarrow \mathscr{D}^\prime(X,\Omega_X^{1/2}).
$$
More precisely $\mathcal{R}$ and $\mathcal{R}^\ast$ are elliptic Fourier integral operators. 
Let $N^\ast{Z}$ be the conormal bundle of $Z$. 
More precisely we set 
$N^\ast_{(z,x)}{Z}:=T^\ast_{(z,x)}(\mathcal{G}{\times}X)/T^\ast_{(z,x)}{Z}$ 
for any $(z,x) \in Z$, 
and regard it as if it were a vector subspace of $T^\ast_{(z,x)}(\mathcal{G}{\times}X)$. 
Set 
\begin{align*}
  (N^\ast{Z}\setminus0)^T
& :=
  \{(x,\eta,z,\zeta) : (z,\zeta,x,\eta) \in N^\ast{Z}\setminus0\}, 
\\
  (N^\ast{Z}\setminus0)^\prime
& :=
  \{(z,\zeta,x,-\eta) : (z,\zeta,x,\eta) \in N^\ast{Z}\setminus0\}. 
\end{align*}
Combining the results of \cite{MazzucchelliSaloTzou} and \cite{Hoermander4} we have the following. 
\begin{theorem}[{\cite[Theorem~2.2]{MazzucchelliSaloTzou} and \cite[Theorem~25.2.2]{Hoermander4}}]
\label{theorem:fios}
Suppose that $Z$ is a double fibration with $\dim(Z)=N+n^\prime$. 
Then $\mathcal{R}$ and $\mathcal{R}^\ast$ are elliptic Fourier integral operators of order $-(N+2n^\prime-n)/4$ with canonical relations $(N^\ast{Z}\setminus0)^\prime$ 
and $\bigl((N^\ast{Z}\setminus0)^T\bigr)^\prime$ respectively. 
More precisely  
\begin{align*}
  \mathcal{R}
& \in 
  \mathcal{I}^{-(N+2n^\prime-n)/4}
  \bigl(\mathcal{G}{\times}X,N^\ast{Z}\setminus0;\Omega_{\mathcal{G}{\times}X}^{1/2}\bigr),
\\
  \mathcal{R}^\ast
& \in 
  \mathcal{I}^{-(N+2n^\prime-n)/4}
  \bigl(X\times\mathcal{G},(N^\ast{Z}\setminus0)^T;\Omega_{X\times\mathcal{G}}^{1/2}\bigr), 
\end{align*}
where 
\begin{align*}
  N^\ast{Z}\setminus0 
& =
  \bigl\{
  \bigl(z,A(z,x)\eta,x,\eta\bigr) : 
  (z,x) \in Z, \eta \in N^\ast_xG_z\setminus\{0\}
  \bigr\}
\\
& =
  \bigl\{
  \bigl(z,\zeta,x,B(z,x)\zeta\bigr) : 
  (z,x) \in Z, \zeta \in N^\ast_zH_x\setminus\{0\}
  \bigr\},
\end{align*}
$A(z,x) \in \operatorname{Hom}(N^\ast_xG_z,T^\ast_z\mathcal{G})$ 
and 
$B(z,x) \in \operatorname{Hom}(N^\ast_zH_x,T^\ast_xX)$ 
smoothly depend on $(z,x) \in Z$ respectively. 
\end{theorem}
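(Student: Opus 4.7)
The plan is to realize the Schwartz kernel $K_{\mathcal{R}}$ of $\mathcal{R}$ as a conormal distribution with respect to the embedded submanifold $Z\subset\mathcal{G}{\times}X$, and then to read off order, canonical relation, and ellipticity from the standard theory of conormal Lagrangian distributions (Hörmander, Chapter~25). First I would unfold the definition of $\mathcal{R}$ against a test half-density $v\in\mathscr{D}(\mathcal{G},\Omega^{1/2}_\mathcal{G})$. Using the relation among $dZ$, $dG_z$, and $d\mathcal{G}$ spelled out in the excerpt, $\langle\mathcal{R}f,v\rangle$ takes the form
$$
\int_Z \kappa(z,x)\,\frac{f}{\lvert{dX}\rvert^{1/2}}(x)\,\frac{v}{\lvert{d\mathcal{G}}\rvert^{1/2}}(z)\,dZ(z,x),
$$
so $K_{\mathcal{R}}$ is the product of the smooth weight $\kappa$ with the delta half-density on $Z$ inside $\mathcal{G}{\times}X$.

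Next I would choose, near any point $(z_0,x_0)\in Z$, local coordinates on $\mathcal{G}{\times}X$ in which $Z=\{\rho_1=\dots=\rho_{n''}=0\}$, where $n''=n-n'=\dim(\mathcal{G}{\times}X)-\dim Z$. Writing each $\delta(\rho_j)=(2\pi)^{-1}\int e^{i\theta_j\rho_j}\,d\theta_j$ exhibits $K_{\mathcal{R}}$ as an oscillatory integral with $n''$ phase variables, nondegenerate phase $\phi=\theta\cdot\rho$, and symbol of order zero (namely $\kappa$ times a smooth density factor). The critical set of $\phi$ sweeps out exactly $N^\ast Z\setminus0$, and Hörmander's formula for the order of a Lagrangian half-density distribution on a manifold of dimension $N+n$ with $n''$ phase variables gives
$$
\tfrac{n''}{2}-\tfrac{N+n}{4}=\tfrac{n-n'}{2}-\tfrac{N+n}{4}=-\tfrac{N+2n'-n}{4},
$$
which is the claimed order. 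The principal symbol is $\kappa$ times the canonical half-density on $N^\ast Z$, so ellipticity is equivalent to the stated nonvanishing of $\kappa$. The conventional twist $(N^\ast Z\setminus 0)'$ (flipping the sign of the $X$-covector) gives the canonical relation in the Fourier integral operator sense. The adjoint $\mathcal{R}^\ast$ has kernel $\overline{\kappa}$ times the delta density of the transposed submanifold $Z^T\subset X\times\mathcal{G}$, so the same argument, with the roles of $\mathcal{G}$ and $X$ swapped, produces $((N^\ast Z\setminus0)^T)'$ as its canonical relation and the identical order.

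Finally I would establish the two parametrizations of $N^\ast Z\setminus 0$. Fix $(z,x)\in Z$. A covector $(\zeta,\eta)\in T^\ast_z\mathcal{G}\oplus T^\ast_xX$ lies in $N^\ast_{(z,x)}Z$ iff $\zeta(v)+\eta(w)=0$ for every $(v,w)\in T_{(z,x)}Z$. Restricting to vectors with $w=0$ forces $\zeta$ to kill the image of $\ker d\pi_X\cap T_{(z,x)}Z$ under $d\pi_\mathcal{G}$, which is $T_zH_x$; hence $\zeta\in N^\ast_zH_x$. Symmetrically $\eta\in N^\ast_xG_z$. Each of $N^\ast_{(z,x)}Z$, $N^\ast_xG_z$, $N^\ast_zH_x$ has dimension $n''$. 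Injectivity of the projection $(\zeta,\eta)\mapsto\eta$ uses the submersion property of $\pi_\mathcal{G}$: if $\eta=0$ then $\zeta$ annihilates $d\pi_\mathcal{G}(T_{(z,x)}Z)=T_z\mathcal{G}$, so $\zeta=0$. Dimension counting then yields the isomorphism, and its inverse viewed inside $T^\ast_z\mathcal{G}$ is the announced $A(z,x)$; exchanging the roles of $\pi_\mathcal{G}$ and $\pi_X$ produces $B(z,x)$. Smoothness of $A$ and $B$ in $(z,x)\in Z$ is inherited from the smooth dependence of local defining functions of $Z$ and of the frames $\{v_i\},\{w_j\}$.

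The main obstacle I anticipate is purely bookkeeping: one must track carefully the normalizations of the half-densities $|d\mathcal{G}|^{1/2}$, $|dX|^{1/2}$, $|dZ|^{1/2}$, $|dG_z|^{1/2}$, $|dH_x|^{1/2}$ so that after the oscillatory-integral representation the amplitude is really $\kappa$ (up to an invertible smooth factor) and the order computation lands precisely on the asserted half-integer $-(N+2n'-n)/4$. Once the kernel is identified cleanly as a conormal distribution with amplitude $\kappa$, the remaining statements are a direct invocation of Theorem~25.2.2 of \cite{Hoermander4} combined with the description of $N^\ast Z$ above.
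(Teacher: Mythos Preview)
Your sketch is correct and is precisely the standard argument. Note, however, that the paper does not supply its own proof of this theorem: it is stated as a quoted result from \cite[Theorem~2.2]{MazzucchelliSaloTzou} and \cite[Theorem~25.2.2]{Hoermander4}, accompanied only by the remark that the Schwartz kernels of $\mathcal{R}$ and $\mathcal{R}^\ast$ are conormal distributions and that the explicit local forms of $A(z,x)$ and $B(z,x)$ are given in the subsequent lemma (Lemma~\ref{theorem:localexpression}). Your outline---identify $K_{\mathcal{R}}$ as $\kappa$ times the delta half-density on $Z$, write it locally as an oscillatory integral with $n''$ phase variables, compute the order $n''/2-(N+n)/4=-(N+2n'-n)/4$, and read off ellipticity from the nonvanishing of $\kappa$---is exactly what those references do. Your abstract argument for the parametrizations of $N^\ast Z\setminus 0$ by $A(z,x)$ and $B(z,x)$ (via injectivity of the projections $(\zeta,\eta)\mapsto\eta$ and $(\zeta,\eta)\mapsto\zeta$, using that $\pi_\mathcal{G}$ and $\pi_X$ are submersions) is the coordinate-free version of what Lemma~\ref{theorem:localexpression} spells out in the charts $\{x''=\phi(z,x')\}$ and $\{z''=b(x,z')\}$. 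There is nothing to correct; the only caveat, which you already flag, is the half-density bookkeeping needed to see that the amplitude is genuinely $\kappa$ times a nonvanishing smooth factor.
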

The Schwartz kernels of $\mathcal{R}$ and $\mathcal{R}^\ast$ are conormal distributions. 
See e.g., \cite[Section~18.2]{Hoermander3} for conormal distributions. 
We give the concrete local expressions of $A(z,x)$ and $B(z,x)$ in the next section. 
$$
\begin{diagram}
\node[2]{(N^\ast{Z}\setminus0)^\prime} \arrow{sw,t}{\pi_\text{L}} \arrow{se,t}{\pi_\text{R}}
\\
\node{T^\ast\mathcal{G}\setminus0} \node[2]{T^\ast{X}\setminus0}  
\end{diagram}
$$
\begin{center}
Fig.2\ Conormal bundle of double fibration and natural projections. 
\end{center}
\par
There are numerous examples of double fibration transforms 
as is introduced in \cite{MazzucchelliSaloTzou}. 
See also 
\cite{Siggi,Chihara1} for the $d$-plane transforms on $\mathbb{R}^n$,  
\cite{Chihara2,HolmanUhlmann,StefanovUhlmannVasy} for the geodesic X-ray transform, 
\cite{FeizmohammadiIlmavirtaOksanen,LassasOksanenStefanovUhlmann,VasyWang} 
for the light ray transform, 
\cite{OksanenSaloStefanovUhlmann} for the analysis of real principal type operators, and references therein. 
In this paper we make use of microlocal analysis 
based on conormal distributions and more generally Lagrangian distributions. 
See H\"ormander's textbooks \cite{Hoermander3,Hoermander4} for this. 
The explanations in \cite{KrishnanQuinto} and \cite[Section~2]{WebberHolmanQuinto} provide an excellent introduction to microlocal analysis as it relates to integral geometry and tomography. 
\par
The plan of this paper is as follows. In Section~2 we introduce local expressions of the double fibration $Z$ and $Z$-conjugate points following \cite[Section~3]{MazzucchelliSaloTzou}, and confirm basic facts including $A(z,x)$ and $B(z,x)$. In Section~3 we show that the normal operator $\mathcal{R}^\ast\mathcal{R}$ becomes an elliptic pseudodifferential operator of order $-n^\prime$ provided that there are no $Z$-conjugate points (Theorem~\ref{theorem:normal1}). Finally, in Section~4 we show that if all the conjugate points are regular (See Definition~\ref{theorem:z-conjugate}) and some additional condition on the sets of regular conjugate points holds, then we obtain the decomposition of $\mathcal{R}^\ast\mathcal{R}$ like Theorem~\ref{theorem:hkust} (see Theorem~\ref{theorem:normal2}).  
\section{Preliminaries}
\label{section:local}
In this section we introduce local expressions of $Z$ and $Z$-conjugate points following 
\cite{MazzucchelliSaloTzou}, and confirm basic facts used later. 
Since $\dim(Z)=N+n^\prime=(N+n)-n^{\prime\prime}$, 
there exist a local coordinate system  
$(z,x)=(z^\prime,z^{\prime\prime},x^\prime,x^{\prime\prime})\in\mathbb{R}^{N-n^{\prime\prime}}\times\mathbb{R}^{n^{\prime\prime}}\times\mathbb{R}^{n^\prime}\times\mathbb{R}^{n^{\prime\prime}}$, 
and $\mathbb{R}^{n^{\prime\prime}}$-valued smooth functions 
$\phi(z,x^\prime)$ and $b(x,z^\prime)$ such that $Z$ has local expressions  
$\{x^{\prime\prime}=\phi(z,x^\prime)\}$ and $\{z^{\prime\prime}=b(x,z^\prime)\}$. 
We have local expressions of linear mappings 
$A(z,x) \in \operatorname{Hom}(N^\ast_xG_z,T^\ast_z\mathcal{G})$ 
and 
$B(z,x) \in \operatorname{Hom}(N^\ast_zH_z,T^\ast_xX)$, 
and these expressions show that both $A(z,x)$ and $B(z,x)$ depend on $(z,x) \in Z$ smoothly. 
\begin{lemma}[{\cite[Lemmas~2.4, 2.5 and 2.6]{MazzucchelliSaloTzou}}]
\label{theorem:localexpression}
Suppose that $Z$ is a double fibration. 
\begin{itemize}
\item 
If $Z$ is locally given by $\{x^{\prime\prime}=\phi(z,x^\prime)\}$, then in these coordinates 
\begin{align*}
  T_xG_z
& =
  \bigl\{
  \bigl(v^\prime,\phi_{x^\prime}(z,x^\prime)v^\prime\bigr) 
  : 
  v^\prime \in \mathbb{R}^{n^\prime}
  \bigr\},
\\
  N^\ast_xG_z
& =
  \bigl\{
  \bigl(-\phi_{x^\prime}(z,x^\prime)^T\eta^{\prime\prime},\eta^{\prime\prime}\bigr)
  :
  \eta^{\prime\prime} \in \mathbb{R}^{n^{\prime\prime}}
  \bigr\},
\\
  N^\ast_{(z,x)}Z
& =
  \Bigl\{
  \Bigl(
  -\phi_z(z,x^\prime)^T\eta^{\prime\prime},
  \bigl(-\phi_{x^\prime}(z,x^\prime)^T\eta^{\prime\prime},\eta^{\prime\prime}\bigr)
  \Bigr)
  : 
  \eta^{\prime\prime} \in \mathbb{R}^{n^{\prime\prime}}  
  \Bigr\},     
\end{align*}
$$
A(z,x)
\begin{bmatrix}
-\phi_{x^\prime}(z,x^\prime)^T
\\
I_{n^{\prime\prime}} 
\end{bmatrix}
\eta^{\prime\prime}
=
-\phi_z(z,x^\prime)^T\eta^{\prime\prime},
\quad
\eta^{\prime\prime}\in\mathbb{R}^{n^{\prime\prime}},
$$
$\operatorname{Ran}\bigl(A(z,x)\bigr)=N^\ast_zH_x$, 
and $\operatorname{rank}\bigl(A(z,x)\bigr)=\operatorname{rank}\bigl(\phi_z(z,x^\prime)\bigr)=n^{\prime\prime}$, where $I_{n^{\prime\prime}}$ is the $n^{\prime\prime}{\times}n^{\prime\prime}$ identity matrix. 
\item 
If $Z$ is locally given by $\{z^{\prime\prime}=b(x,x^\prime)\}$, then in these coordinates 
\begin{align*}
  T_zH_x
& =
  \bigl\{
  \bigl(w^\prime,b_{z^\prime}(x,z^\prime)v^\prime\bigr) 
  : 
  w^\prime \in \mathbb{R}^{N-n^{\prime\prime}}
  \bigr\},
\\
  N^\ast_zH_x
& =
  \bigl\{
  \bigl(-b_{z^\prime}(x,z^\prime)^T\zeta^{\prime\prime},\zeta^{\prime\prime}\bigr)
  :
  \zeta^{\prime\prime} \in \mathbb{R}^{n^{\prime\prime}}
  \bigr\},
\\
  N^\ast_{(z,x)}Z
& =
  \Bigl\{
  \Bigl(
  \bigl(-b_{z^\prime}(x,z^\prime)^T\zeta^{\prime\prime},\zeta^{\prime\prime}\bigr),
   -b_x(x,z^\prime)^T\zeta^{\prime\prime}
  \Bigr)
  : 
  \zeta^{\prime\prime} \in \mathbb{R}^{n^{\prime\prime}}  
  \Bigr\},     
\end{align*}
$$
B(z,x)
\begin{bmatrix}
-b_{z^\prime}(x,z^\prime)^T
\\
I_{n^{\prime\prime}} 
\end{bmatrix}
\zeta^{\prime\prime}
=
-b_x(x,z^\prime)^T\zeta^{\prime\prime},
\quad
\zeta^{\prime\prime}\in\mathbb{R}^{n^{\prime\prime}},
$$
$\operatorname{Ran}\bigl(B(z,x)\bigr)=N^\ast_xG_z$, 
and $\operatorname{rank}\bigl(B(z,x)\bigr)=\operatorname{rank}\bigl(b_x(x,z^\prime)\bigr)=n^{\prime\prime}$.
\end{itemize}
\end{lemma}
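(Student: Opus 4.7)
The two bullets are symmetric, so I focus on the first; the second follows from the same argument after interchanging the roles of $(z,\mathcal{G},b)$ and $(x,X,\phi)$. The strategy is a direct unwinding of the definitions using the graph representation of $Z$, followed by a rank computation forced by the double fibration hypothesis.

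\textbf{Tangent and conormal spaces.} Fix the local expression $Z=\{x^{\prime\prime}=\phi(z,x^\prime)\}$. For fixed $z$, the fiber $G_z$ is the graph $\{(x^\prime,\phi(z,x^\prime)):x^\prime\in\mathbb{R}^{n^\prime}\}$, and differentiating this parameterization with respect to $x^\prime$ gives the stated form of $T_xG_z$. The conormal $N^\ast_xG_z$ is its annihilator in $T^\ast_xX$: a covector $(\eta^\prime,\eta^{\prime\prime})$ annihilates every $\bigl(v^\prime,\phi_{x^\prime}(z,x^\prime)v^\prime\bigr)$ iff $\eta^\prime+\phi_{x^\prime}(z,x^\prime)^T\eta^{\prime\prime}=0$, which yields the claimed formula. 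Since $Z$ is locally cut out by the $n^{\prime\prime}$ equations $x^{\prime\prime}-\phi(z,x^\prime)=0$, the conormal bundle $N^\ast_{(z,x)}Z$ is spanned by the differentials of these defining functions, giving the stated expression parameterized by $\eta^{\prime\prime}\in\mathbb{R}^{n^{\prime\prime}}$.

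\textbf{Extracting $A(z,x)$ and the rank identity.} By Theorem~\ref{theorem:fios}, every element of $N^\ast{Z}\setminus0$ can be written as $(z,A(z,x)\eta,x,\eta)$ with $\eta\in N^\ast_xG_z$. Matching the $X$-covector component with $\eta=\bigl(-\phi_{x^\prime}(z,x^\prime)^T\eta^{\prime\prime},\eta^{\prime\prime}\bigr)$ and reading off the $\mathcal{G}$-component yields $A(z,x)\eta=-\phi_z(z,x^\prime)^T\eta^{\prime\prime}$, which is exactly the stated formula. The equality $\operatorname{rank}\bigl(A(z,x)\bigr)=\operatorname{rank}\bigl(\phi_z(z,x^\prime)\bigr)$ is immediate from this explicit description, and $\operatorname{rank}\bigl(\phi_z(z,x^\prime)\bigr)=n^{\prime\prime}$ follows from the submersion property of $\pi_X:Z\to X$: parameterizing $Z$ by $(z,x^\prime)$, the Jacobian of $\pi_X$ is $\bigl(\begin{smallmatrix}0 & I_{n^\prime}\\ \phi_z & \phi_{x^\prime}\end{smallmatrix}\bigr)$, whose rank equals $n$ precisely when $\phi_z$ has full row rank $n^{\prime\prime}$. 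The identity $\operatorname{Ran}\bigl(A(z,x)\bigr)=N^\ast_zH_x$ then follows by comparing with the $N^\ast_zH_x$-formula obtained in the second bullet: both sides are the $n^{\prime\prime}$-dimensional $\mathcal{G}$-component of the single object $N^\ast_{(z,x)}Z$.

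\textbf{Main point of care.} No conceptually deep step is required; the proof is essentially bookkeeping around the graph description of $Z$ and its projections. The point that needs the most attention is the consistency between the two local descriptions used in the two bullets: if $Z$ is simultaneously represented as $\{x^{\prime\prime}=\phi(z,x^\prime)\}$ and $\{z^{\prime\prime}=b(x,z^\prime)\}$ on a common neighborhood, which is guaranteed by the bi-submersivity built into the double fibration hypothesis, then the two resulting parameterizations of $N^\ast_{(z,x)}Z$ must agree. This compatibility is what ultimately identifies $\operatorname{Ran}\bigl(A(z,x)\bigr)$ with $N^\ast_zH_x$ and yields the symmetric formulas for $B(z,x)$ in the second bullet, where the rank $\operatorname{rank}\bigl(b_x(x,z^\prime)\bigr)=n^{\prime\prime}$ now comes from the submersivity of $\pi_\mathcal{G}$.
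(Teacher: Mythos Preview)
Your proof is correct. The paper does not supply its own proof of this lemma; it is quoted verbatim from \cite[Lemmas~2.4, 2.5 and 2.6]{MazzucchelliSaloTzou}, so there is no in-paper argument to compare against. Your approach---computing tangent and conormal spaces directly from the graph parameterization, reading off $A(z,x)$ by matching the two components of $N^\ast_{(z,x)}Z$ as in Theorem~\ref{theorem:fios}, and extracting the rank of $\phi_z$ (resp.\ $b_x$) from the submersivity of $\pi_X$ (resp.\ $\pi_\mathcal{G}$)---is the standard and expected one, and your identification $\operatorname{Ran}\bigl(A(z,x)\bigr)=N^\ast_zH_x$ via the compatibility of the two local descriptions of $N^\ast_{(z,x)}Z$ is exactly right, since the projection of $N^\ast_{(z,x)}Z$ to $T^\ast_z\mathcal{G}$ is injective once $\phi_z$ has full rank $n^{\prime\prime}$.
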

We will introduce the notion of conjugate points associated with the double fibration $Z$. 
For this purpose we will make use of 
$$
A(z,x)^\ast 
\in 
\operatorname{Hom}\bigl(T_z\mathcal{G},(N^\ast_xG_z)^\ast\bigr)
\simeq
\operatorname{Hom}\bigl(T_z\mathcal{G},N_xG_z\bigr), 
\quad
(z,x) \in Z, 
$$ 
where $N_xG_z:=T_xX/T_xG_z$ is the normal space of $G_z$ at $x \in G_z$, and $\simeq$ means that the both hand sides are identified since $N_xG_z$ and $(N^\ast_xG_z)^\ast=\bigl((N_xG_z)^\ast\bigr)^\ast$ are naturally identified. 
The following facts seem to be obvious, 
but we will check this is true just in case. 
\begin{lemma}
\label{theorem:Aast}
Suppose that $Z$ is a double fibration. Then we have 
$$
T_zH_x
\simeq
\operatorname{Ker}\bigl(A(z,x)^\ast\bigr),
\quad
T_xG_z
\simeq
\operatorname{Ker}\bigl(B(z,x)^\ast\bigr),
$$
for any $(z,x) \in Z$. 
\end{lemma}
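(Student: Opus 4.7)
The plan is to reduce both identities to the standard linear algebra fact that, for a linear map $L: V \to W$ between finite dimensional vector spaces, the kernel of the transpose $L^\ast: W^\ast \to V^\ast$ equals the annihilator $(\operatorname{Ran} L)^\circ$ of the range of $L$ inside $W^\ast$. Once this is in place, the statements in Lemma~\ref{theorem:Aast} follow immediately from the range descriptions already obtained in Lemma~\ref{theorem:localexpression}.

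For the first identity, note that $A(z,x): N^\ast_xG_z \to T^\ast_z\mathcal{G}$ has $\operatorname{Ran}(A(z,x)) = N^\ast_zH_x$ by Lemma~\ref{theorem:localexpression}. Passing to the adjoint, $A(z,x)^\ast$ is a linear map $T_z\mathcal{G} \to (N^\ast_xG_z)^\ast$, where the target is identified with $N_xG_z = T_xX/T_xG_z$ via the canonical duality between a conormal space in $T^\ast_xX$ and its associated normal (quotient) space in $T_xX$. By the general fact above, $\operatorname{Ker}(A(z,x)^\ast)$ is the annihilator of $N^\ast_zH_x$ inside $T_z\mathcal{G}$. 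Since $N^\ast_zH_x$ is by definition the annihilator of $T_zH_x$ in $T^\ast_z\mathcal{G}$, the double annihilator theorem in finite dimensions yields $\operatorname{Ker}(A(z,x)^\ast) = T_zH_x$ as desired. A quick dimension check confirms the result: from $\operatorname{rank}(A(z,x)) = n^{\prime\prime}$ one gets $\dim \operatorname{Ker}(A(z,x)^\ast) = N - n^{\prime\prime} = \dim T_zH_x$.

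The second identity is proved by the same argument with the roles of $\mathcal{G}$ and $X$ interchanged: since $B(z,x): N^\ast_zH_x \to T^\ast_xX$ has range $N^\ast_xG_z$, its adjoint $B(z,x)^\ast: T_xX \to N_zH_x$ has kernel equal to the annihilator of $N^\ast_xG_z$ in $T_xX$, which is $T_xG_z$ by the definition of the conormal space.

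There is no substantive obstacle here; the argument is entirely linear-algebraic once Lemma~\ref{theorem:localexpression} is available. The only point requiring a little care is the identification $(N^\ast_xG_z)^\ast \simeq N_xG_z$ (and likewise $(N^\ast_zH_x)^\ast \simeq N_zH_x$), which is precisely the isomorphism the authors have already built into their use of $\simeq$ in the statement of the lemma.
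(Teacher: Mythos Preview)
Your proof is correct. The paper takes a slightly more hands-on route for one direction: to show $T_zH_x \subset \operatorname{Ker}(A(z,x)^\ast)$, it picks $w \in T_zH_x$, runs a curve $z(s) = z + sw + \mathcal{O}(s^2)$ inside $H_x$, differentiates the local defining relation $x'' = \phi(z(s),x')$ to obtain $\phi_z(z,x')w = 0$, and then reads off $A(z,x)^\ast w = 0$ from the explicit formula for $A(z,x)$ in Lemma~\ref{theorem:localexpression}. The reverse inclusion in the paper is argued exactly as you do, via $\operatorname{Ran}(A(z,x)) = N^\ast_zH_x$ and annihilators. You streamline matters by invoking the general identity $\operatorname{Ker}(L^\ast) = (\operatorname{Ran} L)^\circ$ together with the double annihilator theorem, which handles both inclusions at once and makes transparent that the lemma is pure finite-dimensional linear algebra once the range description in Lemma~\ref{theorem:localexpression} is available. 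The paper's concrete computation has the minor side benefit of recording the coordinate identification $T_zH_x \simeq \operatorname{Ker}\phi_z(z,x')$, which reappears in the proof of Lemma~\ref{theorem:submanifold}.
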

\begin{proof}
We shall show only $T_zH_x\simeq\operatorname{Ker}\bigl(A(z,x)^\ast\bigr)$. 
We can prove $T_xG_z\simeq\operatorname{Ker}\bigl(B(z,x)^\ast\bigr)$ similarly. 
Fix arbitrary $(z,x) \in Z$ and pick up arbitrary $w \in T_zH_x \subset T_z\mathcal{G}$. 
We first show that $w \in \operatorname{Ker}\bigl(A(z,x)^\ast\bigr)$. 
We consider a short curve in $H_x$ of the form 
$$
z(s)=z+sw+\mathcal{O}(s^2)
\quad\text{near}\quad
s=0.
$$
We may assume that $Z$ is locally given by $\{x^{\prime\prime}=\phi(z,x^\prime)\}$. 
The curve $z(s)$ satisfies $x^{\prime\prime}=\phi\bigl(z(s),x^\prime\bigr)$ 
since $\bigl(z(s),x\bigr) \in Z$ near $s=0$. Hence we deduce that 
\begin{equation}
0
=
\frac{d}{ds}\bigg\vert_{s=0}
\phi\bigl(z(s),x^\prime\bigr)
=
\frac{d}{ds}\bigg\vert_{s=0}
\phi\bigl(z+sw+\mathcal{O}(s^2),x^\prime\bigr)
=
\phi_z(z,x^\prime)w.
\label{equation:zero1} 
\end{equation}
Lemma~\ref{theorem:localexpression} implies that for any $\eta \in N^\ast_xG_z$ 
there exists $\eta^{\prime\prime}$ in appropriate coordinates such that 
$A(z,x)\eta=-\phi_z(z,x^\prime)^T\eta^{\prime\prime}$. 
Then for any $\eta \in N^\ast_xG_z$ and for any $\tilde{w} \in T_z\mathcal{G}$, 
we have 
$$
\eta\bigl(A(z,x)^\ast\tilde{w}\bigr)
=
A(z,x)\eta(\tilde{w})
=
-\phi_z(z,x^\prime)^T\eta^{\prime\prime}(\tilde{w})
=
-\eta^{\prime\prime}\bigl(\phi_z(z,x^\prime)\tilde{w}\bigr). 
$$
Substitute \eqref{equation:zero1} into the above. 
Then we deduce that $A(z,x)^\ast{w}=0$ in $N_xG_z$. 
Therefore we obtain $w \in \operatorname{Ker}\bigl(A(z,x)^\ast\bigr)$ and 
$T_zH_x \subset \operatorname{Ker}\bigl(A(z,x)^\ast\bigr)$. 
\par
Conversely, we assume that $w \in \operatorname{Ker}\bigl(A(z,x)^\ast\bigr) \subset T_z\mathcal{G}$. 
Then we immediately deduce that $w \in T_zH_x$ since 
$\zeta(w)=0$ for any $\zeta \in \operatorname{Ran}\bigl(A(z,x)\bigr)=N^\ast_zH_x=T^\ast_z\mathcal{G}/T^\ast_z{H_x}$. 
Hence we have $\operatorname{Ker}\bigl(A(z,x)^\ast\bigr) \subset T_zH_x$. 
This completes the proof. 
\end{proof}
We now introduce the notion of a $Z$-conjugate triplet. 
Suppose that $Z$ is a double fibration. 
Fix arbitrary $(z,w) \in T\mathcal{G}$, and consider a curve in $\mathcal{G}$ of the form 
$$
z(s)=z+sw+\mathcal{O}(s^2)
\quad\text{near}\quad
s=0. 
$$
Then $(G_{z(s)})$ is said to be a variation of $G_z$, 
and the variation field $J_w : G_z \rightarrow (N^\ast_xG_z)^\ast$ 
associated to $(G_{z(s)})$ is defined by 
$$
J_w(x)
:=
A(z,x)^\ast{w}
\in 
(N^\ast_xG_z)^\ast
\simeq
N_xG_z
=
T_xX/T_xG_z
$$
for $x \in G_z$. 
For $z \in \mathcal{G}$ and $x,y \in G_z$, set 
$$
V_z(x,y)
:=
\{J_w(x) : w \in T_z\mathcal{G}, J_w(y)=0\}. 
$$
Note that $\dim\bigl(V_z(x,y)\bigr) \leqq n^{\prime\prime}$ holds 
since $\operatorname{rank}\bigl(A(z,x)^\ast\bigr)=n^{\prime\prime}$. 
We have 
$$
V_z(x,x)=\{J_w(x) : w \in T_z\mathcal{G}, J_w(x)=0\}=\{0\}.
$$
Further we deduce that 
$$
\dim\bigl(V_z(x,y)\bigr) = \dim\bigl(V_z(y,x)\bigr)
$$
for any $z \in \mathcal{G}$ and $x,y \in G_z$ since 
$$
\operatorname{rank}\bigl(A(z,x)^\ast\bigr)
=
\operatorname{rank}\bigl(A(z,y)^\ast\bigr)
=
n^{\prime\prime}, 
$$
$$
\dim\Bigl(\operatorname{Ker}\bigl(A(z,x)^\ast\bigr)\Bigr)
=
\dim\Bigl(\operatorname{Ker}\bigl(A(z,y)^\ast\bigr)\Bigr)
=
N-n^{\prime\prime}. 
$$
We now state the definition of a $Z$-conjugate triplet with reference to 
\cite{HolmanUhlmann,MazzucchelliSaloTzou}.  
\begin{definition}
\label{theorem:z-conjugate} 
Suppose that $Z$ is a double fibration and $N \geqq 2n^{\prime\prime}$. 
Let $k=1,\dotsc,n^{\prime\prime}$. 
\begin{itemize}
\item 
Let $z \in \mathcal{G}$ and let $x,y \in G_z$ with $x{\ne}y$.  
We say that $(z;x,y)$ is a $Z$-conjugate triplet of degree $k$ 
if $\dim\bigl(V_z(x,y)\bigr)=n^{\prime\prime}-k$. 
\item
We say that a $Z$-conjugate triplet $(z;x,y)$ of degree $k$ is regular if there exist 
a neighborhood $U_x$ of $x$ in $X$, 
a neighborhood $U_y$ of $y$ in $X$, 
and 
a neighborhood $W_z$ of $z$ in $\mathcal{G}$ 
such that any $Z$-conjugate triplet 
$(\tilde{z};\tilde{x},\tilde{y}) \in W_z{\times}U_x{\times}U_y$ 
is also of degree $k$. 
The set of all the regular $Z$-conjugate triplets of degree $k$ is denoted by $C_{R,k}$. 
Set $C_R:=\cup_{k=1}^{n^{\prime\prime}}C_{R,k}$.
\item 
The set of all the $Z$-conjugate triplets which are not regular is denoted by $C_S$. 
\end{itemize}
\end{definition}
The fact that the given $(z;x,y)$ is not a $Z$-conjugate triplet is characterized in 
\cite{MazzucchelliSaloTzou}. 
\begin{lemma}[{\cite[Lemma~3.1]{MazzucchelliSaloTzou}}]
\label{theorem:notzconjugate}
Suppose that $Z$ is a double fibration and $N \geqq 2n^{\prime\prime}$. 
Let $z \in \mathcal{G}$, and let $x,y \in G_z$ such that $x{\ne}y$. 
The following conditions are mutually equivalent.
\begin{itemize}
\item[{\rm (a)}] 
$(z;x,y)$ is not a $Z$-conjugate triplet. 
\item[{\rm (b)}] 
$\dim\bigl(V_z(x,y)\bigr)=n^{\prime\prime}$. 
\item[{\rm (c)}] 
$\dim
\bigl(
\operatorname{Ker}(A(z,x)^\ast)
\cap
\operatorname{Ker}(A(z,y)^\ast)
\bigr)=N-2n^{\prime\prime}$, i.e., $\dim(T_zH_x{\cap}T_zH_y)=N-2n^{\prime\prime}$. 
\item[{\rm (d)}] 
$\operatorname{Ker}(A(z,x)^\ast)+\operatorname{Ker}(A(z,y)^\ast)=T_z\mathcal{G}$, 
i.e., $T_zH_x+T_zH_y=T_z\mathcal{G}$. 
\end{itemize}
\end{lemma}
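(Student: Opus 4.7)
The plan is to establish the chain (a)$\Leftrightarrow$(b)$\Leftrightarrow$(c)$\Leftrightarrow$(d), where the first link is essentially definitional and the remaining two are straight linear algebra built on Lemma~\ref{theorem:Aast} together with the rank identity $\operatorname{rank}(A(z,x))=n^{\prime\prime}$ from Lemma~\ref{theorem:localexpression}.

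The key reformulation driving the whole argument is
\begin{equation*}
V_z(x,y) = A(z,x)^\ast\bigl(\operatorname{Ker}(A(z,y)^\ast)\bigr),
\end{equation*}
which is immediate from $J_w(x)=A(z,x)^\ast w$ together with the observation that $J_w(y)=0$ means exactly $w\in\operatorname{Ker}(A(z,y)^\ast)$. Because $\operatorname{rank}(A(z,x)^\ast)=\operatorname{rank}(A(z,x))=n^{\prime\prime}$, the inequality $\dim V_z(x,y)\leq n^{\prime\prime}$ holds unconditionally, and by Definition~\ref{theorem:z-conjugate} the triplet $(z;x,y)$ is a $Z$-conjugate triplet of some degree $k\in\{1,\dots,n^{\prime\prime}\}$ iff $\dim V_z(x,y)\in\{0,\dots,n^{\prime\prime}-1\}$. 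Saturating the upper bound is therefore equivalent to $(z;x,y)$ failing to be $Z$-conjugate of any degree, which gives (a)$\Leftrightarrow$(b).

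For (b)$\Leftrightarrow$(c) I would apply rank–nullity to $A(z,x)^\ast$ restricted to $\operatorname{Ker}(A(z,y)^\ast)$, whose restricted kernel is $\operatorname{Ker}(A(z,x)^\ast)\cap\operatorname{Ker}(A(z,y)^\ast)$, obtaining
\begin{equation*}
\dim V_z(x,y) = (N-n^{\prime\prime}) - \dim\bigl(\operatorname{Ker}(A(z,x)^\ast)\cap\operatorname{Ker}(A(z,y)^\ast)\bigr),
\end{equation*}
where I have used Lemma~\ref{theorem:Aast} to identify the two kernels with $T_zH_x$ and $T_zH_y$, and the rank identity to compute $\dim\operatorname{Ker}(A(z,y)^\ast)=N-n^{\prime\prime}$. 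Hence $\dim V_z(x,y)=n^{\prime\prime}$ iff $\dim(T_zH_x\cap T_zH_y)=N-2n^{\prime\prime}$; the standing hypothesis $N\geq 2n^{\prime\prime}$ is exactly what makes the right-hand side nonnegative. Then (c)$\Leftrightarrow$(d) is immediate from the subspace dimension formula
\begin{equation*}
\dim(T_zH_x+T_zH_y) = 2(N-n^{\prime\prime}) - \dim(T_zH_x\cap T_zH_y),
\end{equation*}
whose right-hand side equals $N=\dim T_z\mathcal{G}$ exactly when the intersection has dimension $N-2n^{\prime\prime}$; since $T_zH_x+T_zH_y\subset T_z\mathcal{G}$ always, the equality of dimensions is the same as the sum filling $T_z\mathcal{G}$.

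The proof is almost entirely formal, so there is no serious obstacle. The only bookkeeping point deserving care is the identification used in the first displayed equation: the condition $J_w(y)=0$, which \emph{a priori} lives in $N_yG_z=T_yX/T_yG_z$, must be matched with $A(z,y)^\ast w=0$ in $(N^\ast_yG_z)^\ast$, using the canonical isomorphism $(N^\ast_yG_z)^\ast\simeq N_yG_z$ invoked at the start of Lemma~\ref{theorem:Aast}. Once that matching is in place, rank–nullity and the inclusion–exclusion formula for subspaces finish the proof.
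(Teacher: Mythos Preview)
Your proof is correct. Note that the paper does not actually supply its own proof of this lemma---it is quoted from \cite{MazzucchelliSaloTzou}---so there is nothing to compare against directly; however, your argument is essentially the $k=0$ specialisation of the paper's proof of the companion Lemma~\ref{theorem:zconjugateproperty}, which likewise reduces everything to rank--nullity for $A(z,x)^\ast$ restricted to $\operatorname{Ker}(A(z,y)^\ast)$ together with the dimension formula for a sum of subspaces.
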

Similarly we have the basic properties of $Z$-conjugate triplets as follows. 
\begin{lemma}
\label{theorem:zconjugateproperty}
Suppose that $Z$ is a double fibration and $N \geqq 2n^{\prime\prime}$. 
Let $z \in Z$, and let $x,y \in G_z$ with $x{\ne}y$. 
Pick up a basis of $T_z\mathcal{G}$ including a basis of 
$\operatorname{Ker}\bigl(A(z,x)^\ast\bigr)\cup\operatorname{Ker}\bigl(A(z,y)^\ast\bigr)$. 
Using this basis, we can see 
$T_z\mathcal{G}/\operatorname{Ker}\bigl(A(z,x)^\ast\bigr)$ and 
$T_z\mathcal{G}/\operatorname{Ker}\bigl(A(z,x)^\ast\bigr)$ as vector subspaces of $T_z\mathcal{G}$. 
Fix arbitrary $k=1,\dotsc,n^{\prime\prime}$. 
Then the following conditions are mutually equivalent. 
\begin{itemize}
\item[{\rm (a)}] 
$(z;x,y)$ is a conjugate triplet of degree $k$, i.e., 
$\dim\bigl(V_z(x,y)\bigr)=n^{\prime\prime}-k$.  
\item[{\rm (b)}] 
$\dim
\Bigl(
\bigl(
T_z\mathcal{G}/\operatorname{Ker}(A(z,x)^\ast)
\bigr)
\cap
\bigl(
T_z\mathcal{G}/\operatorname{Ker}(A(z,y)^\ast)
\bigr)
\Bigr)=k$.
\item[{\rm (c)}] 
$\dim
\bigl(
\operatorname{Ker}(A(z,x)^\ast)
\cap
\operatorname{Ker}(A(z,y)^\ast)
\bigr)=N-2n^{\prime\prime}+k$.
\item[{\rm (d)}] 
$\dim(N^\ast_zH_x{\cap}N^\ast_zH_y)=k$. 
\end{itemize}
\end{lemma}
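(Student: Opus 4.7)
The plan is to show that all four conditions are equivalent to the single numerical assertion $\dim(K_x\cap K_y)=N-2n''+k$, where I abbreviate $K_x:=\operatorname{Ker}(A(z,x)^\ast)$ and $K_y:=\operatorname{Ker}(A(z,y)^\ast)$, both viewed as subspaces of $T_z\mathcal{G}$. By Lemma \ref{theorem:localexpression} these subspaces have dimension $N-n''$, and by Lemma \ref{theorem:Aast} they coincide with $T_zH_x$ and $T_zH_y$ respectively. The identifications in Section \ref{section:local} moreover realise $N^\ast_zH_x$ and $N^\ast_zH_y$ as the annihilators of $K_x$ and $K_y$ inside $T^\ast_z\mathcal{G}$.

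First I would establish (a)$\Leftrightarrow$(c). Unwinding the definitions, $V_z(x,y)=A(z,x)^\ast(K_y)$, and rank--nullity applied to the restriction $A(z,x)^\ast\vert_{K_y}$, whose kernel is exactly $K_x\cap K_y$, gives
\[
\dim V_z(x,y)=\dim K_y-\dim(K_x\cap K_y)=(N-n'')-\dim(K_x\cap K_y),
\]
so $\dim V_z(x,y)=n''-k$ is equivalent to $\dim(K_x\cap K_y)=N-2n''+k$.

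Next, for (c)$\Leftrightarrow$(d), the annihilator identification gives $N^\ast_zH_x\cap N^\ast_zH_y=(K_x+K_y)^\perp$, so
\[
\dim(N^\ast_zH_x\cap N^\ast_zH_y)=N-\dim(K_x+K_y)=\dim(K_x\cap K_y)-(N-2n''),
\]
using the elementary identity $\dim(K_x+K_y)+\dim(K_x\cap K_y)=2(N-n'')$. The right-hand side equals $k$ precisely under (c). For (b)$\Leftrightarrow$(c), I interpret the intersection of the two quotients in (b) as the common quotient $T_z\mathcal{G}/(K_x+K_y)$, i.e.\ the meet of $T_z\mathcal{G}/K_x$ and $T_z\mathcal{G}/K_y$ in the lattice of quotients of $T_z\mathcal{G}$. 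Its dimension is $N-\dim(K_x+K_y)$, which by the same identity equals $k$ iff (c) holds.

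The argument is essentially a linear-algebra dimension count; there is no analytic obstacle. The only subtlety is care with the various identifications set up in Lemmas \ref{theorem:localexpression} and \ref{theorem:Aast} (in particular the realisation of $N^\ast_zH_x$ as an annihilator in $T^\ast_z\mathcal{G}$ and the identity $K_x=T_zH_x$), together with fixing a meaning for the intersection symbol in (b); once these are pinned down, the four conditions collapse to the same assertion about $\dim(K_x\cap K_y)$.
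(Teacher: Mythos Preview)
Your proposal is correct and follows essentially the same linear-algebra dimension count as the paper. The paper packages the argument via an auxiliary space $W_z(x,y)$ and two direct-sum decompositions, while you use rank--nullity on $A(z,x)^\ast\vert_{K_y}$ and the annihilator identity $(K_x+K_y)^\perp=K_x^\perp\cap K_y^\perp$; these are the same computations, and your explicit handling of the meaning of the intersection in (b) is in fact a little more careful than the paper's, which simply declares (b)$\Leftrightarrow$(d) obvious.
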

\begin{proof}
If we set 
$$
W_z(x,y):=\bigl(T_z\mathcal{G}/\operatorname{Ker}(A(z,x)^\ast)\bigr)\cap\operatorname{Ker}(A(z,y)^\ast)
$$ 
for short, then we have $V_z(x,y)=A(z,x)^\ast\bigl(W_z(x,y)\bigr)$, 
$A(z,x)^\ast: W_z(x,y) \rightarrow V_z(x,y)$ is injective, and 
$\dim\bigl(V_z(x,y)\bigr)=\dim\bigl(W_z(x,y)\bigr)$.  
We first recall some elementary facts:
\begin{equation}
\dim\bigl(\operatorname{Ker}(A(z,x)^\ast)\bigr)=\dim(T_z\mathcal{G})-\operatorname{rank}(A(z,x)^\ast)=N-n^{\prime\prime},
\label{equation:fact81}
\end{equation}
\begin{equation}
\dim\bigl(T_z\mathcal{G}/\operatorname{Ker}(A(z,x)^\ast)\bigr)=n^{\prime\prime}, 
\label{equation:fact82}
\end{equation}
\begin{equation}
T_z\mathcal{G}/\operatorname{Ker}(A(z,x)^\ast)
=
W_z(x,y)
\oplus
\bigl(
T_z\mathcal{G}/\operatorname{Ker}(A(z,x)^\ast)
\cap
(T_z\mathcal{G}/\operatorname{Ker}(A(z,y)^\ast)
\bigr),
\label{equation:fact83}
\end{equation}
\begin{equation}
\operatorname{Ker}(A(z,y)^\ast)
=
W_z(x,y)
\oplus
\bigl(\operatorname{Ker}(A(z,x)^\ast)\cap\operatorname{Ker}(A(z,y)^\ast)\bigr).
\label{equation:fact84}
\end{equation}
\eqref{equation:fact82} and  \eqref{equation:fact83} prove the equivalence of (a) and (b), 
and that \eqref{equation:fact81} and  \eqref{equation:fact84} show the equivalence of (a) and (c). 
It is easy to see the equivalence of (b) and (d). 
\end{proof}
We now state a few remarks. 
Recall that 
$$
A(z,x)^\ast \in \operatorname{Hom}(T_z\mathcal{G},N_xG_x), 
\quad
N_zH_z=T_z\mathcal{G}/T_zH_x \simeq T_z\mathcal{G}/\operatorname{Ker}\bigl(A(z,x)^\ast\bigr).
$$ 
If we restrict $A(z,x)^\ast$ on $N_zH_x$, 
we can see this restricted mapping as a vector space isomorphism from $N_zH_x$ onto $N_xG_z$, 
and we can define its inverse denoted by $\bigl(A(z,x)^\ast\vert_{N_zH_x}\bigr)^{-1}$. 
So we can define a composition 
$A(z,y)^\ast\circ\bigl(A(z,x)^\ast\vert_{N_zH_x}\bigr)^{-1} \in \operatorname{Hom}(N_xG_z,N_yG_z)$ 
provided that $z \in H_x{\cap}H_y$. 
If $(z;x,y)$ is a conjugate triplet of degree $k$, 
(c) and (b) of Lemma~\ref{theorem:zconjugateproperty} shows that 
\begin{equation}
\dim(H_x{\cap}H_y)=N-2n^{\prime\prime}+k, 
\quad
\operatorname{rank}
\Bigl(A(z,y)^\ast\circ\bigl(A(z,x)^\ast\vert_{N_zH_x}\bigr)^{-1}\Bigr)
=
k
\label{equation:rankisk}
\end{equation}
respectively.
\par
We now consider the characterization of $Z$-conjugate triplets 
in terms of local expressions of $Z$. 
Suppose that $(z_0;x_0,y_0) \in \mathcal{G}{\times}X{\times}X$ is a 
$Z$-conjugate triplet of degree $k=1,\dotsc,n^{\prime\prime}$. 
Then we have $x_0,y_0 \in G_{z_0}$, $z_0 \in H_{x_0}{\cap}H_{y_0}$, 
and $\operatorname{dim}\bigl(V_{z_0}(x_0,y_0)\bigr)=n^{\prime\prime}-k$, 
and there exist $\mathbb{R}^{n^{\prime\prime}}$-valued functions $\phi(z,x^\prime)$ 
and $\psi(z,y^\prime)$ such that $Z$ is expressed as 
$\{x^{\prime\prime}=\phi(z,x^\prime)\}$ near $(z_0,x_0)$ 
and $\{y^{\prime\prime}=\psi(z,y^\prime)\}$ near $(z_0,y_0)$. 
In terms of $\phi$ and $\psi$, we have 
$$
V_{z_0}(x_0,y_0)
=
\bigl\{\phi_z(z_0,x_0^\prime)w : w \in \operatorname{Ker}\bigl(\psi_z(z_0,y_0^\prime)\bigr)\bigr\}. 
$$
Denote the entries of $\phi$ and $\psi$ 
by $\phi^{(i)}$ and $\psi^{(i)}$ with $i=1,\dotsc,n^{\prime\prime}$ respectively. 
We see $\phi_z$ and $\psi_z$ as $n^{\prime\prime}{\times}N$ matrices. 
Then $\{\phi_z^{(1)}(z,x^\prime),\dotsc,\phi_z^{(n^{\prime\prime})}(z,x^\prime)\}$ 
and $\{\psi_z^{(1)}(z,y^\prime),\dotsc,\psi_z^{(n^{\prime\prime})}(z,y^\prime)\}$ 
are linearly independent near $(z_0,x_0)$ and $(z_0,y_0)$ respectively since 
$\operatorname{rank}\bigl(\phi_z(z,x^\prime)\bigr)=n^{\prime\prime}$ and 
$\operatorname{rank}\bigl(\psi_z(z,y^\prime)\bigr)=n^{\prime\prime}$. 
Note that 
$$
\operatorname{Ker}\bigl(\psi_z(z_0,y_0^\prime)\bigr)
=
\operatorname{span}
\bigl\langle
\psi_z^{(1)}(z_0,y_0^\prime)^T,\dotsc,\psi_z^{(n^{\prime\prime})}(z_0,y_0^\prime)^T
\bigr\rangle^\perp, 
$$
where $\perp$ means the orthogonal complement in $\mathbb{R}^N$. 
We deduce that $\operatorname{dim}\bigl(V_{z_0}(x_0,y_0)\bigr)=n^{\prime\prime}-k$ is equivalent to 
\begin{equation}
\operatorname{dim}
\Bigl(
\operatorname{span}
\bigl\langle
\phi_z^{(1)}(z_0,x_0^\prime),\dotsc,\phi_z^{(n^{\prime\prime})}(z_0,x_0^\prime)
\bigr\rangle
\cap
\operatorname{span}
\bigl\langle
\psi_z^{(1)}(z_0,y_0^\prime),\dotsc,\psi_z^{(n^{\prime\prime})}(z_0,y_0^\prime)
\bigr\rangle
\Bigr)=k.
\label{equation:equiv1} 
\end{equation}
We express \eqref{equation:equiv1} by $k$ equations. 
\eqref{equation:equiv1} means that there exist linearly independent $k$ elements of 
\\ 
$\operatorname{span}\bigl\langle\phi_z^{(1)}(z_0,x_0^\prime),\dotsc,\phi_z^{(n^{\prime\prime})}(z_0,x_0^\prime)\bigr\rangle$ belonging to 
$\operatorname{span}\bigl\langle\psi_z^{(1)}(z_0,y_0^\prime),\dotsc,\psi_z^{(n^{\prime\prime})}(z_0,y_0^\prime)\bigr\rangle$. 
Denote by 
$\{\tilde{\psi}_z^{(1)}(z_0,y_0^\prime),\dotsc,\tilde{\psi}_z^{(n^{\prime\prime})}(z_0,y_0^\prime)\}$ 
the Schmidt orthonormalization of  
$\{\psi_z^{(1)}(z_0,y_0^\prime),\dotsc,\psi_z^{(n^{\prime\prime})}(z_0,y_0^\prime)\}$. 
We deduce that \eqref{equation:equiv1} is equivalent to the following: there exist 
$\lambda_1,\dotsc,\lambda_k \in \mathbb{R}^{n^{\prime\prime}}$, 
$\lambda_l=[\lambda_{l1},\dotsc,\lambda_{ln^{\prime\prime}}]$ ($l=1,\dotsc,k$) such that 
$\lambda_1,\dotsc,\lambda_k$ are linearly independent and if we set 
\begin{align*}
  \phi_z^{\lambda_l}(z,x^\prime)
& :=
  \lambda_l
  \phi_z(z,x^\prime)
  =
  \sum_{m=1}^{n^{\prime\prime}}
  \lambda_{lm}
  \phi_z^{(m)}(z,x^\prime),
\\
  H^{\lambda_l}(z,x^\prime,y^\prime)
& :=
  \phi_z^{\lambda_l}(z,x^\prime)\phi_z^{\lambda_l}(z,x^\prime)^T
  -
  \sum_{m=1}^{n^{\prime\prime}}
  \bigl\lvert
  \phi_z^{\lambda_l}(z,x^\prime)
  \tilde{\psi}_z^{(m)}(z,y^\prime)^T
  \bigr\rvert^2 
\\
& =
  \lambda_l\phi_z(z,x^\prime)
  \bigl(I_N-\tilde{\psi}_z^T(z,y^\prime)\tilde{\psi}_z(z,y^\prime)\bigr)
  \phi_z(z,x^\prime)^T\lambda_l^T, 
\\ 
  H^\lambda(z,x^\prime,y^\prime)
& :=
  [H^{\lambda_1}(z,x^\prime,y^\prime),\dotsc,H^{\lambda_k}(z,x^\prime,y^\prime)]^T, 
\end{align*}
for $l=1,\dotsc,k$, then 
\begin{equation}
H^{\lambda}(z_0,x_0^\prime,y_0^\prime)=0. 
\label{equation:equiv2}
\end{equation}
Here $I_N$ is the $N{\times}N$ identity matrix. 
\eqref{equation:equiv2} is the Plancherel-Parseval formula for 
$\phi_z^{\lambda_l}(z_0,x_0^\prime)$ ($l=1,\dotsc,k$) in the vector subspace spanned by 
$\tilde{\psi}_z^{(m)}(z_0,y_0^\prime)$ ($m=1,\dotsc,n^{\prime\prime}$). 
\par
We now introduce an artificial condition (H) for the regular conjugate triplets.  
\begin{description}
\item[Condition (H)] 
Let $k=1,\dotsc,n^{\prime\prime}$. Suppose that $(z_0;x_0,y_0) \in C_{R,k}$. 
Suppose that $H^{\lambda}(z,x^\prime,y^\prime)$ is the same as that of the previous paragraph and satisfies \eqref{equation:equiv2}. 
\\
Condition (H) is that $\operatorname{rank}\bigl(D_{z,x^\prime,y^\prime}H^{\lambda}(z_0,x_0^\prime,y_0^\prime)\bigr)=1$ holds for any choice of linearly independent $\lambda_1,\dotsc,\lambda_k \in \mathbb{R}^{n^{\prime\prime}}$.  
\end{description}
\par
We explain the basic idea behind Condition (H). Roughly speaking, the motivation comes from the geodesic X-ray transform with regular conjugate points. In particular, we clarify the meaning of the requirements “rank one” and “for any choice of $\lambda_1,\dotsc,\lambda_k \in \mathbb{R}^{n^{\prime\prime}}$
\begin{itemize}
\item 
{\bf rank one}: 
In the case of the geodesic X-ray transform, regular conjugate points correspond to a rank drop of exactly one for the differential of the exponential map. Condition (H) is designed so that our setting fits this geometric situation. Thus, the rank-one condition reflects the structure of regular conjugate points in the geodesic X-ray transform. 
\item 
{\bf for any choice of $\lambda_1,\dotsc,\lambda_k$}: 
This condition is imposed to exclude the possibility that two different choices 
$\lambda_1,\dotsc,\lambda_k$ 
and 
$\lambda_1^\prime,\dotsc,\lambda_k^\prime$ 
give rise to level sets
$$
\{H^{\lambda}(z,x^\prime,y^\prime)\bigr)=1\}, 
\quad
\{H^{\lambda^\prime}(z,x^\prime,y^\prime)\bigr)=1\}
$$
that intersect transversally. Such a situation leads to a union of submanifolds that is no longer smooth, making the microlocal analysis of the associated canonical relation significantly more complicated. To avoid these technical difficulties and to keep the exposition transparent, we impose this slightly stronger assumption. While it is not expected to be essential, removing it would require a substantially more involved analysis that is beyond the scope of this paper. 
\end{itemize}
\par
Under this condition, each $C_{R,k}$ becomes a submanifold of $\mathcal{G}{\times}X{\times}X$. 
This fact will play a crucial role in our intersection calculus for the composition 
$\mathcal{R}^{\ast}\mathcal{R}$ under the assumption $C_S=\emptyset$ later. 
More precisely we have the following. 
\begin{lemma}
\label{theorem:submanifold} 
Suppose that $Z$ is a double fibration and $N \geqq 2n^{\prime\prime}$. 
In addition we assume that the condition {\rm (H)} holds 
for all the regular conjugate triplets. 
Then for any $k=1,\dotsc,n^{\prime\prime}$, 
$C_{R,k}$ is an $(N+2n^\prime-1)$-dimensional embedded submanifold of $\mathcal{G}{\times}X{\times}X$. 
\end{lemma}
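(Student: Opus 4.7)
The natural approach is to localize around an arbitrary point $(z_0;x_0,y_0) \in C_{R,k}$, use the graph presentation $Z = \{x^{\prime\prime} = \phi(z,x^\prime)\}$ from Lemma~\ref{theorem:localexpression}, and exhibit $C_{R,k}$ as the zero-set of a map of constant rank. In these coordinates $Z\times_\mathcal{G} Z$ is parametrized by $(z,x^\prime,y^\prime) \in \mathbb{R}^{N+2n^\prime}$, and the containment $C_{R,k} \subset Z\times_\mathcal{G} Z$ is automatic from the requirement $x,y \in G_z$; it therefore suffices to exhibit $C_{R,k}$ as a codimension-$n^\prime$ embedded submanifold of $Z\times_\mathcal{G} Z$.

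Next, by Lemma~\ref{theorem:zconjugateproperty}(d) together with the explicit formula $N^\ast_zH_x = \operatorname{Ran} A(z,x) = \{-\phi_z(z,x^\prime)^T\eta^{\prime\prime} : \eta^{\prime\prime} \in \mathbb{R}^{n^{\prime\prime}}\}$ from Lemma~\ref{theorem:localexpression}, the degree of a triplet is encoded by the rank of the $N \times 2n^{\prime\prime}$ matrix
$$
M(z;x^\prime,y^\prime) := \bigl[\,\phi_z(z,x^\prime)^T \,\big|\, -\phi_z(z,y^\prime)^T\,\bigr],
$$
namely $\dim(N^\ast_zH_x \cap N^\ast_zH_y) = 2n^{\prime\prime} - \operatorname{rank} M$. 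Thus the degree-$k$ condition is the rank condition $\operatorname{rank} M = 2n^{\prime\prime} - k$. Invoking regularity (Definition~\ref{theorem:z-conjugate}), the neighborhood $W_{z_0} \times U_{x_0} \times U_{y_0}$ can be shrunk so that inside it $\operatorname{rank} M$ takes only the two values $2n^{\prime\prime}$ and $2n^{\prime\prime}-k$; hence on $C_{R,k}$ the rank is constantly $2n^{\prime\prime} - k$.

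With constant rank in hand, I would then invoke the standard constant-rank normal form, smoothly trivializing $\ker M$ and $\operatorname{coker} M$ so that an appropriate block of $M$ is forced to vanish, defining $C_{R,k}$ as a smooth embedded submanifold via the implicit function theorem. The dimension is most cleanly pinned down by analyzing the projection $\pi \colon C_{R,k} \to Z$, $(z;x,y) \mapsto (z,x)$: if $\pi$ is an immersion with $0$-dimensional fibers, then $\dim C_{R,k} = \dim Z = N + n^\prime$.

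\textbf{Main obstacle.} The dimension assertion is the crux. A naive pullback of the universal rank-$(2n^{\prime\prime}-k)$ stratum in $\operatorname{Mat}(N\times 2n^{\prime\prime})$ would only give codimension $k(N-2n^{\prime\prime}+k)$, which generally differs from $n^\prime$. The correct count must exploit the topological rigidity imposed by regularity: the upper-semicontinuous function $(z;x,y) \mapsto \dim(N^\ast_zH_x \cap N^\ast_zH_y)$ is allowed to take only the two values $0$ and $k$ inside the neighborhood, ruling out a nontrivial continuous family of conjugates $y(s) \in G_z$ for fixed $(z,x)$. Establishing this isolation — the double-fibration analog of the classical statement that conjugate points along a geodesic are isolated — is the essential technical step: one differentiates the defining identity $\phi_z(z,x^\prime)^T\eta^{\prime\prime}(s) = \phi_z(z,y(s)^\prime)^T\tilde\eta^{\prime\prime}(s)$ along any putative curve of conjugates in $G_z$ and uses the forbidden rank jump to force the curve to be constant.
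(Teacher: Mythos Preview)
Your overall architecture matches the paper's: localize via the graph presentation of Lemma~\ref{theorem:localexpression}, encode conjugacy as a rank condition, and ultimately exhibit $C_{R,k}$ locally as a graph over $Z$ under $(z;x,y)\mapsto(z,x)$. Your matrix $M(z;x',y')$ is the right object, and you are correct that the isolation of conjugate $y$'s along $G_z$ is the crux.

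There is, however, a genuine gap in your dimension step. The assertion ``if $\pi$ is an immersion with $0$-dimensional fibers then $\dim C_{R,k}=\dim Z$'' is false: any embedding of a lower-dimensional manifold into $Z$ is an immersion with singleton fibers. What you actually need is that $\pi$ is a \emph{local diffeomorphism} onto an open subset of $Z$, and for that you must supply, in addition to uniqueness (your isolation argument), an \emph{existence} statement: for every $(z,x)\in Z$ near $(z_0,x_0)$ there is some $y$ near $y_0$ with $(z;x,y)\in C_{R,k}$. Your proposal addresses only the uniqueness direction. This is tied to the earlier difficulty you flag but do not resolve: the constant-rank normal form cannot be invoked as written, since $\operatorname{rank} M$ is constant only on $C_{R,k}$ itself, not on an open set in $Z\times_\mathcal{G} Z$, so $\ker M$ and $\operatorname{coker} M$ do not extend to smooth bundles across the rank jump. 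The paper's route is different in emphasis: it freezes both $x'=x_0'$ and $y'=y_0'$, applies the implicit function theorem to $F_1=(x''-\phi,\,y''-\psi)$, and combines this with the degree-of-freedom count coming from \eqref{equation:rankisk} to argue that the slice $D_{R,k}$ already has dimension $N$; it then reintroduces the $x'$-direction and invokes a transversality of the projected slice $\Pi(z_0,x_0',x_0'')$ to $G_{z_0}$ inside $Z$ (and symmetrically on the $y$-side) to obtain both existence and uniqueness of $y$ simultaneously. Whichever route you take, the missing ingredient in your plan is the surjectivity half of the graph statement.
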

\begin{proof}
Fix arbitrary $k=1,\dotsc,n^{\prime\prime}$, 
and pick up arbitrary $(z_0;x_0,y_0) \in C_{R,k}$. 
Suppose that $Z$ is expressed as 
$\{x^{\prime\prime}=\phi(z,x^\prime)\}$ near $(z_0,x_0)$ 
and 
$\{y^{\prime\prime}=\psi(z,y^\prime)\}$ near $(z_0,y_0)$ 
respectively. 
Let $H^\lambda(z,x^\prime,y^\prime)$ be the same as 
that of the definition of Condition (H). 
Then $H^\lambda(z_0,x_0^\prime,y_0^\prime)=0$ and 
$\operatorname{rank}\bigl(D_{z,x^\prime,y^\prime}H^{\lambda}(z_0,x_0^\prime,y_0^\prime)\bigr)=1$. 
Pick up nonzero row 
$\nabla_{z,x^\prime,y^\prime}H^{\lambda_l}(z_0,x_0^\prime,y_0^\prime)$ of 
$D_{z,x^\prime,y^\prime}H^{\lambda}(z_0,x_0^\prime,y_0^\prime)$. 
The implicit function theorem implies that 
$\{H^{\lambda_l}(z,x^\prime,y^\prime)=0\}$ is a hypersurface in 
$\mathcal{G} \times X \times X$ near $(z_0;x_0,y_0)$. 
Then $\{H^{\lambda}(z,x^\prime,y^\prime)=0\}$ is also a hypersurface in 
$\mathcal{G} \times X \times X$ near $(z_0;x_0,y_0)$ 
since $\operatorname{rank}\bigl(D_{z,x^\prime,y^\prime}H^{\lambda}(z_0,x_0^\prime,y_0^\prime)\bigr)=1$. Therefore 
$$
\{x^{\prime\prime}=\phi(z,x^\prime)\}
\cap
\{y^{\prime\prime}=\psi(z,y^\prime)\}
\cap
\{H^{\lambda}(z,x^\prime,y^\prime)=0\}
\subset 
C_{R,k}
$$
near $(z_0;x_0,y_0)$ since 
$(z_0;x_0,y_0)$ is a regular $Z$-conjugate triplet of degree $k$. 
Using the Condition (H) again, we deduce that 
the connected component of $C_{R,k}$ containing $(z_0;x_0,y_0)$ is characterized by 
$$
F(x^{\prime\prime},y^{\prime\prime},z;x^\prime,y^\prime)
:=
\begin{bmatrix}
x^{\prime\prime}-\phi(z,x^\prime)
\\
y^{\prime\prime}-\psi(z,y^\prime) 
\\
H^\lambda(z,x^\prime,y^\prime)
\end{bmatrix}
=0
$$
near $(z_0;x_0,y_0)$. 
The differential of $F$ at $(z_0;x_0,y_0)$ is 
\begin{align*}
& DF(x_0^{\prime\prime},y_0^{\prime\prime},z_0;x_0^\prime,y_0^\prime)
\\
  =
& \begin{bmatrix}
  I_{n^{\prime\prime}} & O & -\phi_z(z_0,x_0^\prime) & -\phi_{x^\prime}(z_0,x_0^\prime) & O
  \\
  O & I_{n^{\prime\prime}} &  -\psi_z(z_0,x_0^\prime) & O & -\psi_{x^\prime}(z_0,y_0^\prime)
  \\
  O & O & H_z^\lambda(z_0,x_0^\prime,y_0^\prime) & H_{x^\prime}^\lambda(z_0,x_0^\prime,y_0^\prime) & H_{y^\prime}^\lambda(z_0,x_0^\prime,y_0^\prime)
  \end{bmatrix}. 
\end{align*}
Define an upper triangular matrix $Q$ by 
$$
Q(z,x^\prime,y^\prime)
:=
\begin{bmatrix}
I_{n^{\prime\prime}} & O & \phi_z(z,x^\prime) & \phi_{x^\prime}(z,x^\prime) & O 
\\
  & I_{n^{\prime\prime}} & \psi_z(z,y^\prime) & O & \phi_{x^\prime}(z,y^\prime) 
\\
  &                      & I_N                & O            & O
\\
  &                      &                    & I_{n^\prime} & O
\\
  &                      &                    &              & I_{n^\prime}
\end{bmatrix}.
$$
Then $\operatorname{det}\bigl(Q(z,x^\prime,y^\prime)\bigr)\equiv1$ and 
$$
DF(x_0^{\prime\prime},y_0^{\prime\prime},z_0;x_0^\prime,y_0^\prime)
Q(z_0,x_0^\prime,y_0^\prime)
=
\begin{bmatrix}
I_{2n^{\prime\prime}} & O
\\
O & D_{z,x^\prime,y^\prime}H^\lambda(z_0,x_0^\prime,y_0^\prime) 
\end{bmatrix}. 
$$
Hence $\operatorname{rank}\bigl(DF(x_0^{\prime\prime},y_0^{\prime\prime},z_0;x_0^\prime,y_0^\prime)\bigr)=2n^{\prime\prime}+1$, and we deduce that $C_{R,k}$ is an embedded 
submanifold of $\mathcal{G} \times X \times X$ of dimension 
$(N+2n)-(2n^{\prime\prime}+1)=N+2n^\prime-1$. 
\end{proof}
%
%
\section{Normal operators without conjugate points}
\label{section:bolker}
We begin with the definition of the Bolker condition,  
and review the relationship between the Bolker condition and $Z$-conjugacy quickly 
following \cite{MazzucchelliSaloTzou}. 
\begin{definition}
\label{theorem:bolker} 
Suppose that $Z$ is a double fibration. 
We say that the canonical relation $(N^\ast{Z}\setminus0)^\prime$ satisfies the Bolker condition at 
$(z,\zeta,x,\eta) \in (N^\ast{Z}\setminus0)^\prime$ 
if $\pi_\text{L}$ is an injective immersion at $(z,\zeta,x,\eta)$, 
that is, 
$$
\pi_\text{L}^{-1}\bigl((z,\zeta)\bigr)
=
\{(z,\zeta,x,\eta)\},
\quad
\operatorname{rank}
\bigl(D\pi_\text{L}\vert_{(z,\zeta,x,\eta)}\bigr)
=
N+n.
$$
We say that the canonical relation $(N^\ast{Z}\setminus0)^\prime$ satisfies the Bolker condition if 
$\pi_\text{L}$ is an injective immersion everywhere in $(N^\ast{Z}\setminus0)^\prime$.  
\end{definition}
The injectivities of $\pi_\text{L}$ and $D\pi_\text{L}$ were respectively characterized by 
Mazzucchelli, Salo and Tzou in \cite{MazzucchelliSaloTzou}. 
\begin{lemma}[{Injectivity of $\pi_\text{L}$, \cite[{Lemmas~3.1 and 3.2}]{MazzucchelliSaloTzou}}]
\label{theorem:injectivity1} 
Suppose that $Z$ is a double fibration. 
Then $\pi_\text{L}^{-1}\bigl((z,\zeta)\bigr)=\{(z,\zeta,x,\eta)\}$ holds if and only if 
$\eta\bigl(V_z(x,y)\bigr)\ne\{0\}$ for any $y \in G_z\setminus\{x\}$. 
In particular, $\pi_\text{L}^{-1}\bigl((z,\zeta)\bigr)=\{(z,\zeta,x,\eta)\}$ holds if and only if  
$(z;x,y)$ is not a Z-conjugate triplet for any $y \in G_z\setminus\{x\}$. 
Moreover $\pi_\text{L}$ is injective on $Z$ if and only if there are no $Z$-conjugate triplets 
in $\mathcal{G}{\times}X{\times}X$. 
\end{lemma}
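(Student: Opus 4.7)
The plan is to combine the explicit parametrization of $(N^\ast Z\setminus 0)^\prime$ from Theorem~\ref{theorem:fios} with the adjoint identifications of Lemma~\ref{theorem:Aast}. The starting point is that every element of $(N^\ast Z\setminus 0)^\prime$ lying over $(z,\zeta)$ has the form $(z,\zeta,y,-\eta^\prime)$ with $(z,y)\in Z$, $\eta^\prime\in N^\ast_yG_z\setminus\{0\}$, and $A(z,y)\eta^\prime=\zeta$. Lemma~\ref{theorem:localexpression} gives $\operatorname{rank}A(z,y)=n^{\prime\prime}=\dim N^\ast_yG_z$, so $A(z,y)$ is injective on $N^\ast_yG_z$ and, for fixed $y$, the covector $\eta^\prime$ is uniquely determined by $\zeta$. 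Hence $\pi_\text{L}^{-1}\bigl((z,\zeta)\bigr)$ strictly contains $(z,\zeta,x,-\eta)$ if and only if there exists some $y\in G_z\setminus\{x\}$ with $\zeta\in\operatorname{Ran}\bigl(A(z,y)\bigr)$.

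The main step is to translate this range condition into an annihilator condition involving $V_z(x,y)$. From Lemma~\ref{theorem:Aast}, $\ker\bigl(A(z,y)^\ast\bigr)=T_zH_y$ has dimension $N-n^{\prime\prime}$, and a dimension count gives
$$
\operatorname{Ran}\bigl(A(z,y)\bigr)
=
\bigl\{\xi\in T^\ast_z\mathcal{G}:\xi(w)=0\ \text{for every}\ w\in\ker A(z,y)^\ast\bigr\}.
$$
Substituting $\zeta=A(z,x)\eta$ and using the adjoint pairing $\zeta(w)=\eta\bigl(A(z,x)^\ast w\bigr)=\eta\bigl(J_w(x)\bigr)$, the condition $\zeta\in\operatorname{Ran}\bigl(A(z,y)\bigr)$ becomes $\eta\bigl(J_w(x)\bigr)=0$ for every $w$ with $J_w(y)=0$, i.e.\ $\eta\bigl(V_z(x,y)\bigr)=\{0\}$. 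Negating this assertion over all $y\ne x$ yields the first claim. This identification, which rests on the dimension equality $\operatorname{Ran}\bigl(A(z,y)\bigr)=\bigl(\ker A(z,y)^\ast\bigr)^\circ$, is the only real technical point in the argument; it would fail without the rank identity of Lemma~\ref{theorem:localexpression}.

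For the second claim I would argue directly from the first using Lemma~\ref{theorem:notzconjugate}. If no $Z$-conjugate triplet exists, condition (b) of that lemma yields $\dim V_z(x,y)=n^{\prime\prime}=\dim N_xG_z$, hence $V_z(x,y)=N_xG_z$ and $\eta\bigl(V_z(x,y)\bigr)\ne\{0\}$ for every $\eta\ne 0$; thus $\pi_\text{L}$ is injective on $(N^\ast Z\setminus 0)^\prime$. Conversely, if some $(z;x,y)$ is a $Z$-conjugate triplet, then $V_z(x,y)$ is a proper subspace of $N_xG_z$, and any nonzero $\eta\in N^\ast_xG_z$ in its annihilator produces a point of $(N^\ast Z\setminus 0)^\prime$ at which $\pi_\text{L}$ is not injective, giving the contrapositive.
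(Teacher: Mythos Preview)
The paper does not supply its own proof of this lemma; it is quoted verbatim from \cite[Lemma~3.2]{MazzucchelliSaloTzou} and used as a black box. Your argument is correct and is precisely the one suggested by the machinery assembled in Section~\ref{section:local}: the parametrization of $N^\ast Z\setminus 0$ in Theorem~\ref{theorem:fios}, the rank identity $\operatorname{rank}A(z,y)=n^{\prime\prime}$ from Lemma~\ref{theorem:localexpression}, and the identification $\ker A(z,y)^\ast\simeq T_zH_y$ from Lemma~\ref{theorem:Aast} combine exactly as you describe to give $\operatorname{Ran}A(z,y)=(\ker A(z,y)^\ast)^\circ$ and hence the equivalence $\zeta\in\operatorname{Ran}A(z,y)\Longleftrightarrow\eta\bigl(V_z(x,y)\bigr)=\{0\}$.

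One cosmetic point: with the sign convention of the paper, a point $(z,\zeta,x,\eta)\in(N^\ast Z\setminus 0)^\prime$ satisfies $\zeta=A(z,x)(-\eta)$ rather than $\zeta=A(z,x)\eta$, so your substitution in the second paragraph is off by a sign. This is harmless, since both the range condition and the annihilation condition $\eta\bigl(V_z(x,y)\bigr)=\{0\}$ are invariant under $\eta\mapsto-\eta$. In the converse direction of the second claim you should also record that the constructed $\zeta=A(z,x)(-\eta)$ is nonzero (immediate from injectivity of $A(z,x)$), so that the second preimage point over $(z,\zeta)$ genuinely lies in $(N^\ast Z\setminus 0)^\prime$; otherwise the argument is complete.
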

\begin{lemma}[{Injectivity of $D\pi_\text{L}$, \cite[{Lemma~3.3}]{MazzucchelliSaloTzou}}]
\label{theorem:injectivity2} 
Let $Z$ be a double fibration and let $(z_0,\zeta_0,x_0,\eta_0) \in (N^\ast{Z}\setminus0)^\prime$. 
Suppose that $Z$ is given by 
$\{x^{\prime\prime}=\phi(z,x^\prime)\}$ 
and 
$\{z^{\prime\prime}=b(x,z^\prime)\}$ 
near $(z_0,x_0)$. 
The following conditions are mutually equivalent.
\begin{itemize}
\item[(a)] 
$D\pi_\text{L}\vert_{(z_0,\zeta_0,x_0,\eta_0)}$ is injective. 
\item[(b)] 
$D\pi_\text{R}\vert_{(z_0,\zeta_0,x_0,\eta_0)}$ is surjective.   
\item[(c)] 
$\operatorname{rank}\bigl[\phi_z(z_0,x_0^\prime)^T,\partial_{x^\prime}\bigl(\phi_z(z_0,x^\prime)^T\eta_0^{\prime\prime}\bigr)\vert_{x^\prime=x_0^\prime}\bigr]=n$.  
\item[(d)] 
$\operatorname{rank}\bigl[b_x(x_0,z_0^\prime)^T,\partial_{z^\prime}\bigl(b_x(x_0,z^\prime)^T\zeta_0^{\prime\prime}\bigr)\vert_{z^\prime=z_0^\prime}\bigr]=n$.  
\end{itemize}
\end{lemma}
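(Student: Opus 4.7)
The plan is to exploit the two local descriptions of $Z$ from Lemma~\ref{theorem:localexpression} --- one providing a parametrization of $(N^\ast{Z}\setminus0)^\prime$ well-adapted to $\pi_\text{L}$, the other well-adapted to $\pi_\text{R}$ --- compute each of the two Jacobians in block form, and then close the loop between (a) and (b) using the general symplectic duality for canonical relations.

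For (a)$\Leftrightarrow$(c) I would take local coordinates $(z,x^\prime,\eta^{\prime\prime})$ on $(N^\ast{Z}\setminus0)^\prime$ coming from $\{x^{\prime\prime}=\phi(z,x^\prime)\}$, under which
\[
\pi_\text{L}:(z,x^\prime,\eta^{\prime\prime})\longmapsto\bigl(z,-\phi_z(z,x^\prime)^T\eta^{\prime\prime}\bigr).
\]
Because the $z$-component is the identity, the Jacobian contains an $I_N$ block, and injectivity of $D\pi_\text{L}$ reduces to injectivity of the residual $N\times n$ block $\bigl[-\partial_{x^\prime}(\phi_z^T\eta_0^{\prime\prime})\vert_{x_0^\prime},\,-\phi_z(z_0,x_0^\prime)^T\bigr]$ acting on the $(\delta x^\prime,\delta\eta^{\prime\prime})$-variables. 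Full column rank $n$ of that block is precisely condition (c).

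A symmetric calculation based on $\{z^{\prime\prime}=b(x,z^\prime)\}$ parametrizes $(N^\ast{Z}\setminus0)^\prime$ by $(z^\prime,x,\zeta^{\prime\prime})$ and yields $\pi_\text{R}(z^\prime,x,\zeta^{\prime\prime})=\bigl(x,b_x(x,z^\prime)^T\zeta^{\prime\prime}\bigr)$. This time the $x$-component is the identity and the target $T^\ast X$ is $2n$-dimensional, so surjectivity of $D\pi_\text{R}$ reduces to surjectivity of the $n\times N$ block $\bigl[\partial_{z^\prime}(b_x^T\zeta_0^{\prime\prime})\vert_{z_0^\prime},\,b_x(x_0,z_0^\prime)^T\bigr]$ onto $\mathbb{R}^n$, i.e.\ (d). This gives (b)$\Leftrightarrow$(d).

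It remains to connect (a) and (b). My preferred route is to invoke the general fact, standard in the theory of canonical relations (see, e.g., \cite[Section~25.2]{Hoermander4}), that a conic Lagrangian submanifold of $T^\ast\mathcal{G}\times T^\ast X$ has the property that $\pi_\text{L}$ is an immersion at a point iff $\pi_\text{R}$ is a submersion there. I expect keeping this step clean to be the main obstacle: the pedestrian alternative is to prove (c)$\Leftrightarrow$(d) directly by implicit differentiation of the compatibility identities expressing that $\{x^{\prime\prime}=\phi(z,x^\prime)\}$ and $\{z^{\prime\prime}=b(x,z^\prime)\}$ cut out the same submanifold $Z$, which is elementary chain-rule bookkeeping but considerably less transparent than the symplectic argument.
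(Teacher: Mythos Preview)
The paper does not actually prove this lemma: it is quoted verbatim from \cite[Lemma~3.3]{MazzucchelliSaloTzou} and no argument is supplied in the present paper, so there is no ``paper's own proof'' to compare against. Your proposal is therefore not redundant, and the approach you outline is correct.

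A couple of small remarks on execution. For (a)$\Leftrightarrow$(c), your block analysis is right: the Jacobian of $\pi_\text{L}$ in the $(z,x^\prime,\eta^{\prime\prime})$-chart is
\[
\begin{bmatrix}
I_N & 0 & 0 \\[2pt]
\ast & -\partial_{x^\prime}\bigl(\phi_z^T\eta^{\prime\prime}\bigr) & -\phi_z^T
\end{bmatrix},
\]
so injectivity is exactly full column rank of the lower-right $N\times n$ block, which is (c). The computation for (b)$\Leftrightarrow$(d) is the mirror image you describe, with the sign in $\pi_\text{R}$ coming out correctly once you remember that the prime in $(N^\ast Z\setminus0)^\prime$ flips the $X$-covector. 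For (a)$\Leftrightarrow$(b), invoking the standard Lagrangian fact that $D\pi_\text{L}$ is injective iff $D\pi_\text{R}$ is surjective (both being equivalent to the Lagrangian projecting transversally to the relevant fibers) is the clean route; the direct (c)$\Leftrightarrow$(d) computation via implicit differentiation of $\phi\bigl((z^\prime,b(x,z^\prime)),x^\prime\bigr)=x^{\prime\prime}$ works but, as you note, is bookkeeping-heavy and adds nothing conceptually.
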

It is worth to mention that if $D\pi_\text{L}\vert_{(z_0,\zeta_0,x_0,\eta_0)}$ is injective, 
then so is $\pi_\text{L}$ near $(z_0,\zeta_0,x_0,\eta_0)$. 
See \cite[{Lemma~3.4}]{MazzucchelliSaloTzou} for the detail. 
\par
We now state our first results about $\mathcal{R}^\ast\mathcal{R}$ 
when there are no $Z$-conjugate triplets. 
This is not an essentially new result. See, e.g., \cite{Quinto}. 
But we state and prove the theorem 
since we make use of the similar arguments in the next section. 
The explicit formula of $\mathcal{R}^\ast\mathcal{R}$ is given by 
\begin{equation}
\mathcal{R}^\ast\mathcal{R}f(x)
=
\left(
\iint_{H_x{\times}G_z}
\overline{\kappa(z,x)}
\kappa(z,y)
\frac{f}{\lvert{dX}\rvert^{1/2}}(x)
dG_z(y)dH_x(z) 
\right)
\lvert{dX(x)}\rvert^{1/2}, 
\quad
x \in X,
\label{equation:normal}
\end{equation}
for $f \in \mathscr{D}(X,\Omega_X^{1/2})$. 
Set 
$$
\mathcal{C}:=(N^\ast{Z}\setminus0)^\prime,
\quad  
\Delta(T^\ast\mathcal{G}):=\{(z,\zeta,z,\zeta) : (z,\zeta) \in T^\ast\mathcal{G}\}
$$ 
$$
\Delta:=T^\ast{X}\times\Delta(T^\ast\mathcal{G}){\times}T^\ast{X},
\quad
E:=(\mathcal{C}^T\times\mathcal{C})\cap\Delta
$$
for short. The projection $\pi_E$ is defined by 
$$
\pi_E: 
E\ni(x,\eta,z,\zeta,z,\zeta,y,\tilde{\eta}) 
\mapsto 
(x,\eta,y,\tilde{\eta})
\in 
T^\ast{X}{\times}T^\ast{X}. 
$$
\begin{theorem}
\label{theorem:normal1}
Suppose that $Z$ is a double fibration. 
In addition, we assume the following conditions.
\begin{itemize}
\item 
$N \geqq 2n^{\prime\prime}$. 
\item 
$\pi_X: Z \rightarrow X$ is proper, and $\pi_X^{-1}(x)$ is connected for any $x \in X$. 
\item 
There are no $Z$-conjugate triplets, 
and $D\pi_\text{L}$ is injective at all $(z,\zeta,x,\eta) \in \mathcal{C}$. 
\end{itemize}
Then $\mathcal{C}^T\circ\mathcal{C}$ is a clean intersection with excess $e=N-n$, 
and $\mathcal{R}^\ast\mathcal{R}$ is an elliptic pseudodifferential operator of order $-n^\prime$ on $X$.   
\end{theorem}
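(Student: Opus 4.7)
The plan is to apply H\"ormander's clean-composition calculus for Fourier integral operators (\cite[Theorem~25.2.3]{Hoermander4}) to the FIOs $\mathcal{R}$ and $\mathcal{R}^\ast$ from Theorem~\ref{theorem:fios}, both of order $-(N+2n^\prime-n)/4$ and with canonical relations $\mathcal{C}$ and $\mathcal{C}^T$. Properness of $\pi_X$ makes the operators properly supported, so $\mathcal{R}^\ast\mathcal{R}$ is well defined on $\mathscr{E}^\prime(X,\Omega_X^{1/2})$. Once cleanness with excess $e=N-n$ is verified, the composition order is $2\cdot\bigl(-(N+2n^\prime-n)/4\bigr)+e/2=-n^\prime$, matching the claim.

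First I would analyze $E=(\mathcal{C}^T\times\mathcal{C})\cap\Delta$ set-theoretically. Lemma~\ref{theorem:injectivity1} combined with the absence of $Z$-conjugate triplets forces $\pi_\text{L}:\mathcal{C}\to T^\ast\mathcal{G}$ to be injective: if $(z,\zeta,x,\eta)$ and $(z,\zeta,y,\tilde\eta)$ both lie in $\mathcal{C}$, then $(x,\eta)=(y,\tilde\eta)$. Thus $\mathcal{C}^T\circ\mathcal{C}\subseteq\Delta(T^\ast X)$, and the map $(z,\zeta,x,\eta)\mapsto(x,\eta,z,\zeta,z,\zeta,x,\eta)$ identifies $E$ set-theoretically with $\mathcal{C}$. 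Consequently $\dim E=\dim\mathcal{C}=N+n$, while $\pi_E(E)\subseteq\Delta(T^\ast X)$ has dimension $2n$, giving excess $e=N-n$ as required.

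Next I verify cleanness of the intersection. For $p\in E$ and $(v_1,v_2)\in T_p(\mathcal{C}^T\times\mathcal{C})\cap T_p\Delta$, the identification of $\mathcal{C}^T$ with $\mathcal{C}$ turns the $T_p\Delta$-condition into $D\pi_\text{L}(v_1)=D\pi_\text{L}(v_2)$ inside $T_{(z,\zeta)}T^\ast\mathcal{G}$. Injectivity of $D\pi_\text{L}$ from Lemma~\ref{theorem:injectivity2} then yields $v_1=v_2$, so $(v_1,v_2)$ lies in the image of the diagonal embedding $T_p\mathcal{C}\hookrightarrow T_p(\mathcal{C}^T\times\mathcal{C})\cap T_p\Delta$, which is exactly $T_pE$. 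The resulting equality $\dim\bigl(T_p(\mathcal{C}^T\times\mathcal{C})\cap T_p\Delta\bigr)=\dim T_p\mathcal{C}=\dim E$ secures cleanness everywhere on $E$.

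Finally, to upgrade the conclusion to ellipticity I need both $\mathcal{C}^T\circ\mathcal{C}=\Delta(T^\ast X\setminus0)$ and a non-vanishing principal symbol. Since $D\pi_\text{L}$ injective is equivalent to $D\pi_\text{R}$ surjective by Lemma~\ref{theorem:injectivity2}, $\pi_\text{R}$ is a submersion; combined with the properness of $\pi_X$ and the connectedness of $\pi_X^{-1}(x)$, one gets that $\pi_\text{R}$ is surjective onto $T^\ast X\setminus0$ and each fiber $\pi_\text{R}^{-1}(x,\eta)$ is connected. The principal symbol of $\mathcal{R}^\ast\mathcal{R}$ is then furnished by the standard clean-composition formula as the fiber integral of $\lvert\kappa(z,x)\rvert^{2}$ against a Jacobian factor built from $A(z,x)$ of Lemma~\ref{theorem:localexpression}; it is strictly positive because $\kappa$ is nowhere vanishing. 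I expect the principal obstacle to be the careful bookkeeping inside the symbol integration: the ambient space $T^\ast X\times T^\ast\mathcal{G}\times T^\ast\mathcal{G}\times T^\ast X$ is large, the fibers of $\pi_E$ are $(N-n)$-dimensional, and writing the symbol explicitly in local coordinates requires tracking how the orientation forms $dG_z$, $dH_x$ and the weight $\kappa$ combine through the identifications of Lemma~\ref{theorem:localexpression}.
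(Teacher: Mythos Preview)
Your proposal is correct and follows essentially the same route as the paper: both apply H\"ormander's clean-composition calculus \cite[Theorem~25.2.3]{Hoermander4}, identify $E$ with $\mathcal{C}$ via the injectivity of $\pi_\text{L}$ (from the absence of $Z$-conjugate triplets), verify cleanness using the injectivity of $D\pi_\text{L}$, compute the excess $e=N-n$ by dimension count, and use properness and fiber-connectedness of $\pi_X$ for the compactness and connectedness of the $\pi_E$-fibers. The paper is in fact terser than you anticipate on the symbol side---it simply asserts ellipticity because $\lvert\kappa(z,x)\rvert^{2}$ is nowhere vanishing, without writing out the fiber integral---so the ``careful bookkeeping'' you flag as the main obstacle is not carried out explicitly there either.
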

\begin{proof}
Lemma~11 implies that $\pi_L$ is injective since we assume that there are no $Z$-conjugate triplets. 
We also assume that $D\pi_L$ is injective at any point of $\mathcal{C}$. Then the Bolker condition is satisfied on $\mathcal{C}$. It suffices to show the following. 
\begin{itemize}
\item 
$\pi_E(E)=\Delta(T^\ast{X}\setminus0)$, 
where 
$\Delta(T^\ast{X}\setminus0)=\{(x,\eta,x,\eta) : (x,\eta) \in T^\ast{X}\setminus0\}$. 
\item 
$e:=\operatorname{codim}(\mathcal{C}^T\times\mathcal{C})+\operatorname{codim}\bigl(\Delta(T^\ast\mathcal{G})\bigr)-\operatorname{codim}(E)=N-n$, and the order of $\mathcal{R}^\ast\mathcal{R}$ is $-n^\prime$. 
\item 
$\pi_E$ is a proper mapping, and 
$\pi_E^{-1}\bigl((x,\eta,x,\eta)\bigr)$ is connected. 
\item 
For any $c_0 \in E$, if $\Xi \in T_{c_0}(\mathcal{C}^T\times\mathcal{C}){\cap}T_{c_0}\Delta$, 
then $\Xi \in T_{c_0}E$.  
\end{itemize}
If these statements are proved, then we can deduce that $\mathcal{R}^\ast\mathcal{R}$ is a pseudodifferential operator of order $-n^\prime$ on $X$ using \cite[{Theorem~25.2.3}]{Hoermander4} and further the ellipticity of $\mathcal{R}^\ast\mathcal{R}$ follows since $\lvert\kappa(z,x)\rvert^2$ is nowhere vanishing. 
\par
We have $\pi_E(E)=\Delta(T^\ast{X}\setminus0)$ 
using the injectivity of 
$\pi_\text{L}$ of the Bolker condition. 
We compute the excess $e$. Note that 
\begin{align*}
  \operatorname{codim}(\Delta)
& =
  \dim(T^\ast\mathcal{G})=2N, 
\\
  \dim(\mathcal{C})
& =
  \dim(\mathcal{G}{\times}X)=N+n,
\\
  \dim(\mathcal{C}^T\times\mathcal{C})
& =
  2\dim(\mathcal{C})=2N+2n. 
\end{align*}
We use the injectivity of $\pi_\text{L}$ of the Bolker condition again to deduce 
$$
\dim(E)=\dim(\mathcal{C})=N+n. 
$$
Then we obtain 
\begin{align*}
  e
& =
  \operatorname{codim}(\mathcal{C}^T\times\mathcal{C})
  +
  \operatorname{codim}\bigl(\Delta(T^\ast\mathcal{G})\bigr)
  -
  \operatorname{codim}(E)
\\
& =
  -
  \operatorname{dim}(\mathcal{C}^T\times\mathcal{C})
  +
  \operatorname{codim}\bigl(\Delta(T^\ast\mathcal{G})\bigr)
  +
  \operatorname{dim}(E)
\\
& =
  -(2N+2n)+(2N)+(N+n)
  =
  N-n, 
\end{align*}
and the order of $\mathcal{R}^\ast\mathcal{R}$ is given by 
$$
\text{the order of $\mathcal{R}^\ast$}\ 
+\ 
\text{the order of $\mathcal{R}$}\ 
+\
\frac{e}{2}
=
-\frac{N+2n^\prime-n}{2}+\frac{N-n}{2}
=
-n^\prime. 
$$
\par
We check the required properties of $\pi_E$. 
We use the linear mapping $A(z,x)$ and the injectivity of $\pi_\text{L}$ to have 
$$
\pi_E: 
E 
\ni
(x,\eta,z,A(z,x)\eta,z,A(z,x)\eta,x,\eta)
\mapsto 
(x,\eta,x,\eta)
\in 
T^\ast{X}{\times}T^\ast{X}. 
$$
Then we have 
$$
\pi_E^{-1}\bigl((x,\eta,x,\eta)\bigr)
=
\{
(x,\eta,z,A(z,x)\eta,z,A(z,x)\eta,x,\eta) 
: 
z \in H_x=\pi_\mathcal{G}\circ\pi_X^{-1}(x)
\}. 
$$
We deduce that $\pi_E$ is proper since $\pi_X$ is proper, 
and that $\pi_E^{-1}\bigl((x,\eta,x,\eta)\bigr)$ is connected 
since $\pi_X^{-1}(x)$ is connected and $A(z,x)$ is a linear mapping depending smoothly on $(z,x)$. 
\par
Finally we show that $E=(\mathcal{C}^T\times\mathcal{C})\cap\Delta$ is a clean intersection. 
Fix arbitrary $c_0 \in E$, and pick up arbitrary 
$\Xi=(\Xi_1,\Xi_2,\Xi_3,\Xi_4) \in T_{c_0}(\mathcal{C}^T\times\mathcal{C})$. 
Consider a smooth curve in $\mathcal{C}^T\times\mathcal{C}$ of the form 
$$
c(s)
=
c_0+s\Xi+\mathcal{O}(s^2)
=
c_0
+
s(\Xi_1,\Xi_2,\Xi_3,\Xi_4)
+
\mathcal{O}(s^2)
\quad\text{near}\quad
s=0.
$$
If $\Xi \in T_{c_0}\Delta$, then $\Xi_2=\Xi_3$, 
and moreover the injectivity of $D\pi_\text{L}$ implies that $\Xi_1=\Xi_4$. 
Thus we obtain $\Xi=(\Xi_1,\Xi_2,\Xi_2,\Xi_1) \in T_{c_0}E$. 
This completes the proof. 
\end{proof}
%
%
\section{Normal operators with conjugate points}
\label{section:main}
Finally in this section we study the normal operator $\mathcal{R}^\ast\mathcal{R}$ 
with regular $Z$-conjugate triplets. 
Suppose that $Z$ is a double fibration, and split $C_{R,k}$ into the disjoint union of connected components:
$$
C_{R,k}:=\bigcup_{\alpha\in\Lambda_k}C_{R,k,\alpha},
$$
where $k=1,\dotsc,n^{\prime\prime}$ and 
$\Lambda_k$ ($k=1,\dotsc,n^{\prime\prime}$) are the sets of indices of connected components. 
We now remark that $C_{R,k}$ ($k=1,\dotsc,n^{\prime\prime}$) are disjoint and then 
so are $C_{R,k,\alpha}$ ($k=1,\dotsc,n^{\prime\prime}$, $\alpha \in \Lambda_k$).    
We now state the main theorem of this paper. 
\begin{theorem}
\label{theorem:normal2}
Suppose that $Z$ is a double fibration. 
In addition we assume the following conditions. 
\begin{itemize}
\item 
$C_S=\emptyset$.
\item 
$N \geqq 2n^{\prime\prime}$. 
\item 
Condition {\rm (H)}.
\item
$\pi_X$ is proper, and $\pi_X^{-1}(x)$ is connected for any $x \in X$. 
\item 
If $\pi_\text{L}^{-1}\bigl((z,\zeta)\bigr)=\{(z,\zeta,x,\eta)\}$ for $(z,\zeta,x,\eta) \in \mathcal{C}$, then $D\pi_\text{L}\vert_{(z,\zeta,x,\eta)}$ is injective. 
\end{itemize}
Then we have a decomposition of $\mathcal{R}^\ast\mathcal{R}$ of the form 
\begin{equation}
\mathcal{R}^\ast\mathcal{R}
=
P
+
\sum_{k=1}^{n^{\prime\prime}}
\sum_{\alpha \in \Lambda_k}
A_{k,\alpha},
\label{equation:ragasa}
\end{equation}
where $P$ is an elliptic pseudodifferential operator of order $-n^\prime$ on $X$, 
$A_{k,\alpha}$ is a Fourier integral operator whose distribution kernel belongs to 
$$
\mathcal{I}^{-(n+1-k)/2}
(X{\times}X,\mathcal{C}_{A_{k,\alpha}}^\prime;\Omega_{X{\times}X}^{1/2}),
$$
and the canonical relation of $A_{k,\alpha}$ is given by 
\begin{align}
  \mathcal{C}_{A_{k,\alpha}}
& =
  \{
  (x,\eta;y,\tilde{\eta}) \in T^\ast{X}\setminus0 \times T^\ast{X}\setminus0 : 
  \exists(z,\zeta) \in N^\ast{H_x}{\cap}N^\ast{H_y}
  \ \text{s.t.}
\nonumber
\\ 
& \qquad
  (z;x,y) \in C_{R,k,\alpha},\ 
  \eta=B(z,x)\zeta,\  
  \tilde{\eta}=B(z,y)\zeta\}.   
\label{equation:canonicalrelation}
\end{align}
\end{theorem}
The highest order part of the right hand side of \eqref{equation:ragasa} is 
the FIO part of $k=n^{\prime\prime}$
$$
\sum_{\alpha \in \Lambda_{n^{\prime\prime}}}
A_{n^{\prime\prime},\alpha},
\quad
A_{n^{\prime\prime},\alpha}
\in
\mathcal{I}^{-(n^\prime+1)/2}
(X{\times}X,\mathcal{C}_{A_{n^{\prime\prime},\alpha}}^\prime;\Omega_{X{\times}X}^{1/2}).
$$
If $n^\prime=1$, then $-n^\prime=-(n^\prime+1)/2$, otherwise 
$-n^\prime<-(n^\prime+1)/2$. 
This could be an obstruction for the invertibility of $\mathcal{R}$. 
\begin{proof}[Proof of Theorem~\ref{theorem:normal2}]
Set $C_1:=\mathcal{G}{\times}X{\times}X\setminus{C_R}$. 
Then all the elements of $C_1$ are not $Z$-conjugate triplets since $C_S=\emptyset$. 
Set 
$$
Z_1:=\{(z,x) \in Z : (z,x,y) \in C_1 \ \text{with some} \ y \in X\},
$$
$$
\mathcal{N}_1
:=
N^\ast{Z}\setminus0\vert_{Z_1}
=
\{(z,\zeta,x,\eta) : (z,x) \in Z_1\}.
$$
Lemma~\ref{theorem:injectivity1} implies that $\pi_L$ is injective on $\mathcal{N}_1$, 
and $D\pi_L$ is injective at any point of $\mathcal{N}_1$ from the assumption.  
Then the Bolker condition is satisfied on $\mathcal{N}_1$. 
\par
The strategy of the proof is basically due to Holman and Uhlmann \cite{HolmanUhlmann}. 
Set $C_\delta:=\{(z;x,x) : z \in \mathcal{G}, x \in X\}$. 
Since $C_S=\emptyset$ and 
$C_{R,k,\alpha}$ ($k=1,\dotsc,n^{\prime\prime}$, $\alpha \in \Lambda_k$) are disjoint, 
we deduce that $C_\delta$ and $C_{R,k,\alpha}$ are mutually disjoint for any 
$k=1,\dotsc,n^{\prime\prime}$ and $\alpha \in \Lambda_k$. 
Furthermore we deduce the following:
\begin{itemize}
\item 
There exist an open neighborhood $U_\delta$ of $C_\delta$ in $\mathcal{G}{\times}X{\times}X$ and 
open neighborhoods $U_{k,\alpha}$ of $C_{k,\alpha}$ in $\mathcal{G}{\times}X{\times}X$ such that 
$\overline{U_\delta}$ and all the $\overline{U_{k,\alpha}}$ are mutually disjoint. 
\item 
There exist compactly supported smooth functions 
$\varphi_\delta(z,x,y)$ and $\varphi_{k,\alpha}(z,x,y)$ 
on $\mathcal{G}{\times}X{\times}X$ such that  
\begin{alignat*}{3}
  0
& \leqq 
  \varphi_\delta
  \leqq
  1,
& \quad
  \varphi_\delta
& =
  1 
  \quad\text{on}\quad
  C_\delta,
& \quad
  \operatorname{supp}[\varphi_\delta]
& \subset
  U_\delta,
\\
  0
& \leqq 
  \varphi_{k,\alpha}
  \leqq
  1,
& \quad
  \varphi_{k,\alpha}
& =
  1 
  \quad\text{on}\quad
  C_{k,\alpha},
& \quad
  \operatorname{supp}[\varphi_\delta]
& \subset
  U_{k,\alpha}. 
\end{alignat*}
\item 
If we set 
$$
\varphi_0(z,x,y)
:=
1
-
\varphi_\delta(z,x,y)
-
\sum_{k=1}^{n^{\prime\prime}}
\sum_{\alpha \in \Delta_k}
\varphi_{k,\alpha}(z,x,y),
$$
then 
$$
0 \leqq \varphi_0 \leqq 1,
\qquad 
\varphi_0=1
\quad\text{on}\quad
(\mathcal{G}{\times}X{\times}X)\setminus \left(U_\delta \bigcup \left(\displaystyle\cup_{k=1}^{n^{\prime\prime}}\displaystyle\cup_{\alpha\in\Lambda_k}U_{k,\alpha}\right)\right), 
$$
$$
U_0
:=
\operatorname{supp}[\varphi_0]
\subset
(\mathcal{G}{\times}X{\times}X)\setminus \left(C_\delta \bigcup \left(\displaystyle\cup_{k=1}^{n^{\prime\prime}}\displaystyle\cup_{\alpha\in\Lambda_k}C_{R,k,\alpha}\right)\right).
$$
\end{itemize}
The set of nonnegative smooth functions 
$\{\varphi_0,\varphi_\delta\}\cup\{\varphi_{k,\alpha}\}$ 
forms a partition of unity on $\mathcal{G}{\times}X{\times}X$. 
We substitute this into \eqref{equation:normal} to obtain 
$$
\mathcal{R}^\ast\mathcal{R}f(x)
=
A_0f(x)+A_\delta{f}(x)
+
\sum_{k=1}^{n^{\prime\prime}}
\sum_{\alpha \in \Lambda_k}
A_{k,\alpha}f(x),  
$$
\begin{align}
  \frac{A_0 f}{\lvert{dX}\rvert^{1/2}}(x)
& =
  \iint_{H_x{\times}G_z}
  \varphi_0(z,x,y)
  \overline{\kappa(z,x)}\kappa(z,y)
  \frac{f}{\lvert{dX}\rvert^{1/2}}(x)
  dG_z(y)
  dH_x(z),
\nonumber
\\
  \frac{A_\delta f}{\lvert{dX}\rvert^{1/2}}(x)
& =
  \iint_{H_x{\times}G_z}
  \varphi_\delta(z,x,y)
  \overline{\kappa(z,x)}\kappa(z,y)
  \frac{f}{\lvert{dX}\rvert^{1/2}}(x)
  dG_z(y)
  dH_x(z),
\nonumber
\\ 
  \frac{A_{k,\alpha} f}{\lvert{dX}\rvert^{1/2}}(x)
& =
  \iint_{H_x{\times}G_z}
  \varphi_{k,\alpha}(z,x,y)
  \overline{\kappa(z,x)}\kappa(z,y)
  \frac{f}{\lvert{dX}\rvert^{1/2}}(x)
  dG_z(y)
  dH_x(z),
\label{equation:target}
\end{align}
for $f \in \mathscr{D}(X,\Omega_X^{1/2})$. 
We remark that 
$\operatorname{supp}[\varphi_0]\cup\operatorname{supp}[\varphi_\delta]$ 
do not contain $Z$-conjugate triplets, 
and that the injectivity of $\pi_L$ holds at all the points given by 
\begin{equation}
\bigl(z,\zeta,x,B(z,x)\zeta\bigr), 
\bigl(z,\zeta,y,B(z,y)\zeta\bigr) 
\in 
\mathcal{C}, 
\quad
(z,x,y) \in \operatorname{supp}[\varphi_0]\cup\operatorname{supp}[\varphi_\delta]. 
\label{equation:points}
\end{equation}
Hence the assumption on $D\pi_L$ ensures 
that the Bolker condition holds all the points given by \eqref{equation:points}. 
Using the assumptions on $\pi_X$, 
we deduce that $A_\delta$ is an elliptic pseudodifferential operator of order $-n^\prime$ in the same way as the proof of Theorem~\ref{theorem:normal1}, 
and that $A_0$ is a pseudodifferential operator of order $-\infty$. 
Thus we set $P:=A_\delta+A_0$. 
\par
In what follows we concentrate on the analysis of $A_{k,\alpha}$. 
We first prepare the intersection calculus for $A_{k,\alpha}$. 
We remark that the contribution of $(z;x,y) \in U_{k,\alpha}\setminus{C_{R,k,\alpha}}$ 
in \eqref{equation:target} can be contained in $A_0$, and we have only to consider 
$(z;x,y) \in C_{R,k,\alpha}$ in \eqref{equation:target}. 
The canonical relation $\mathcal{C}_{A_{k,\alpha}}$ is given by 
\eqref{equation:canonicalrelation} as a result of the computation of the composition 
$\mathcal{C}^T\circ\mathcal{C}$ with the restriction $(z;x,y) \in C_{R,k,\alpha}$.  
\par
We now define a subset $D_{k,\alpha}$ and $E_{k,\alpha}$ of 
$T^\ast{X}{\times}T^\ast\mathcal{G}{\times}T^\ast\mathcal{G}{\times}T^\ast{X}$ by 
\begin{align}
  D_{k,\alpha}
& :=
  \{(x,\eta,z,\zeta;\tilde{z},\tilde{\zeta},y,\tilde{\eta})
  \in \mathcal{C}^T \times \mathcal{C}
  : 
\nonumber
\\
& \qquad
  \exists y_1, \exists x_1 \in X
  \ \text{s.t.}\ 
  (z;x,y_1), (\tilde{z};x_1,y) \in C_{R,k,\alpha}
  \} 
\nonumber
\\
& =
  \bigl\{
  \bigl(x,B(z,x)\zeta,z,\zeta;\tilde{z},\tilde{\zeta},y,B(\tilde{z},y)\tilde{\zeta}\bigr)
  :
  \zeta \in N^\ast_zH_x\setminus\{0\}, \tilde{\zeta} \in N^\ast_{\tilde{z}}H_y\setminus\{0\},
\nonumber
\\
& \qquad
  \exists y_1, \exists x_1 \in X
  \ \text{s.t.}\ 
  (z;x,y_1), (\tilde{z};x_1,y) \in C_{R,k,\alpha}
  \bigr\}, 
\nonumber
\\
  E_{k,\alpha}
& :=
  D_{k,\alpha}\cap\Delta
\nonumber
\\
& =
  \bigl\{
  \bigl(x,B(z,x)\zeta,z,\zeta;z,\zeta,y,B(z,y)\zeta\bigr)
  :
  \zeta \in (N^\ast_zH_x{\cap}N^\ast_zH_y)\setminus\{0\},
\nonumber
\\
& \qquad
  \exists y_1, \exists x_1 \in X
  \ \text{s.t.}\ 
  (z;x,y_1), (z;x_1,y) \in C_{R,k,\alpha}
  \bigr\}, 
\label{equation:delta}
\end{align}
where $\Delta=T^\ast{X}\times\Delta(T^\ast\mathcal{G}){\times}T^\ast{X}$ 
was defined in the previous section.  
The projection $\pi_{E_{k,\alpha}}$ is defined by 
$$
\pi_{E_{k,\alpha}}: 
E_{k,\alpha}\ni(x,\eta,z,\zeta,z,\zeta,y,\tilde{\eta}) 
\mapsto 
(x,\eta,y,\tilde{\eta})
\in 
T^\ast{X}{\times}T^\ast{X}. 
$$
Then we have $\pi_{E_{k,\alpha}}(E_{k,\alpha})=\mathcal{C}_{A_{k,\alpha}}$. 
\par
To complete the proof of Theorem~\ref{theorem:normal2}, we have only to show the following:  
\begin{itemize}
\item 
$e:=\operatorname{codim}(D_{k,\alpha})+\operatorname{codim}\bigl(\Delta(T^\ast\mathcal{G})\bigr)-\operatorname{codim}(E_{k,\alpha})=N-2n+n^\prime+k$, and the order of $A_{k,\alpha}$ is $-(n+1-k)/2$. 
\item 
$\pi_{E_{k,\alpha}}$ is a proper mapping, and 
$\pi_{E_{k,\alpha}}^{-1}\bigl((x,\eta,x,\eta)\bigr)$ is connected. 
\item 
For any $c_0 \in E_{k,\alpha}$, 
if $\Xi \in T_{c_0}(D_{k,\alpha}){\cap}T_{c_0}\Delta$, 
then $\Xi \in T_{c_0}E_{k,\alpha}$.  
\end{itemize}
\par
We compute the excess $e$. Note that 
\begin{align*}
  \dim(D_{k,\alpha})
& =
  2\dim(N^\ast{C_{R,k,\alpha}})
  =
  2\dim(N^\ast{Z})
  =
  2N+2n,
\\
  \operatorname{codim}(\Delta)
& =
  \dim(T^\ast\mathcal{G})
  =
  2N,
\\
  \dim(E_{k,\alpha})
& =
  \dim(C_{R,k,\alpha})
  +
  \dim(N^\ast_zH_x{\cap}N^\ast_zH_y)
  =
  N+2n^\prime-1+k. 
\end{align*}
Then we obtain 
\begin{align*}
  e
& =
  \operatorname{codim}(D_{k,\alpha})
  +
  \operatorname{codim}\bigl(\Delta(T^\ast\mathcal{G})\bigr)
  -
  \operatorname{codim}(E_{k,\alpha})
\\
& =
  -
  \operatorname{dim}(D_{k,\alpha})
  +
  \operatorname{codim}\bigl(\Delta(T^\ast\mathcal{G})\bigr)
  +
  \operatorname{dim}(E_{k,\alpha})
\\
& =
  -(2N+2n)+(2N)+(N+2n^\prime-1+k)
  =
  N-2n^{\prime\prime}-1+k.  
\end{align*}
We now remark that 
$$
e=N-2n^{\prime\prime}+(k-1) \geqq N-2n^{\prime\prime}\geqq0.
$$
The order of $A_{k,\alpha}$ is given by 
\begin{align*}
& \text{the order of $\mathcal{R}^\ast$}\ 
  +\ 
  \text{the order of $\mathcal{R}$}\ 
  +\
  \frac{e}{2}
\\
  =
& \frac{-N-2n^\prime+n}{2}+\frac{N-2n^{\prime\prime}-1+k}{2}
  =
  -
  \frac{n+1-k}{2}. 
\end{align*}
\par
We check the required properties of $\pi_{E_{k,\alpha}}$. 
We use the linear mapping $A(z,x)$ to have 
$$
\pi_{E_{k,\alpha}}: 
E 
\ni
(x,\eta,z,A(z,x)\eta,z,A(z,y)\tilde{\eta},y,\tilde{\eta})
\mapsto 
(x,\eta,y,\tilde{\eta})
\in 
T^\ast{X}{\times}T^\ast{X} 
$$
with $A(z,x)\eta=A(z,y)\tilde{\eta}$. 
Then we have 
$$
\pi_{E_{k,\alpha}}^{-1}\bigl((x,\eta,y,\tilde{\eta})\bigr)
=
\{
(x,\eta,z,A(z,x)\eta,z,A(z,y)\tilde{\eta},y,\tilde{\eta}) 
: 
z \in H_x{\cap}H_y=\pi_\mathcal{G}(\pi_X^{-1}(x)\cap\pi_X^{-1}(y))
\}. 
$$
We deduce that $\pi_{E_{k,\alpha}}$ is proper since $\pi_X$ is proper, 
and that $\pi_{E_{k,\alpha}}^{-1}\bigl((x,\eta,y,\tilde{\eta})\bigr)$ is connected 
since $\pi_X^{-1}(x)\cap\pi_X^{-1}(y)$ is connected and 
$A(z,x)$ and $A(z,y)$ are linear mappings depending smoothly on $(z,x)$ and $(z,y)$ respectively. 
\par
Finally we show that $E_{k,\alpha}=D_{k,\alpha}\cap\Delta$ is a clean intersection. 
Fix arbitrary $c_0 \in E_{k,\alpha}$, and pick up arbitrary 
$\Xi=(\Xi_1,\Xi_2,\Xi_3,\Xi_4) \in T_{c_0}D_{k,\alpha}$. 
Consider a smooth curve in $D_{k,\alpha}$ of the form 
\begin{align*}
    c(s)
& =
    (c_1(s),c_2(s),c_3(s),c_4(s))
    =
    c_0+s\Xi+\mathcal{O}(s^2)
\\
&  =
    (c_{0,1},c_{0,2},c_{0,3},c_{0,4})
    +
    s(\Xi_1,\Xi_2,\Xi_3,\Xi_4)
    +
    \mathcal{O}(s^2)
    \quad\text{near}\quad
    s=0.
\end{align*}
Note that $c_{0,2}=c_{0,3}$ since $c_0 \in E_{k,\alpha}$.  
Suppose that $\Xi \in T_{c_0}\Delta$. 
It suffices to show that  $\Xi=(\Xi_1,\Xi_2,\Xi_3,\Xi_4) \in T_{c_0}E_{k,\alpha}$. 
We have $(\Xi_1,\Xi_2) \in T_{(c_{0,1},c_{0,2})} \in \mathcal{C}^T$ and 
$(\Xi_3,\Xi_4) \in T_{(c_{0,2},c_{0,4})} \in \mathcal{C}$ 
since $c(s)$ is a smooth curve in $D_{k,\alpha}$ near $c_0$. 
It follows  that $(\Xi_2,\Xi_3) \in T_{(c_{0,2},c_{0,2})}\bigl(\Delta(T^\ast\mathcal{G}))\bigr)$ and $\Xi_2=\Xi_3$ to prove $\Xi=(\Xi_1,\Xi_2,\Xi_2,\Xi_4) \in T_{c_0}E_{k,\alpha}$, where $\Delta(T^\ast\mathcal{G})$ is the diagonal part of $T^\ast\mathcal{G}{\times}T^\ast\mathcal{G}$. 
\par
We argue the above more concretely. 
Recall Lemma~\ref{theorem:localexpression} and \eqref{equation:delta}. 
Then there exist $z_0 \in \mathcal{G}$, $x_0,y_0 \in G_{z_0}$ with $x_0{\ne}y_0$, 
$\zeta_0 \in (N^\ast_{z_0}H_{x_0}{\cap}N^\ast_{z_0}H_{y_0})\setminus\{0\}$, 
and functions $b(x,z^\prime)$ and $\tilde{b}(y,z^\prime)$ such that 
$G_z$ is given by $\{z^{\prime\prime}=b(x,z^\prime)\}$ near $(z_0,x_0)$, 
$G_z$ is given by $\{z^{\prime\prime}=\tilde{b}(y,z^\prime)\}$ near $(z_0,y_0)$, 
$$
c_{0,1}=\bigl(x_0,-b_x(x_0,z_0)^T\zeta_0^{\prime\prime}\bigr), 
\quad
c_{0,2}=c_{0,3}=(z_0,\zeta_0), 
\quad
c_{0,4}=\bigl(x_0,-\tilde{b}_y(y_0,z_0)^T\zeta_0^{\prime\prime}\bigr). 
$$
If $c(s)$ is a curve in $T_{c_0}D_{k,\alpha}$, then there exist 
$u \in T_{x_0}X$, $\mu,\tilde{\mu} \in T_{z_0}\mathcal{G}$, $\nu,\tilde{\nu} \in T_{\zeta_0}(T^\ast_{z_0}\mathcal{G}) $ and $v \in T_{y_0}X$ such that 
$c_1(s)$, $c_2(s)$, $c_3(s)$ and $c_4(s)$ are given by 
\begin{align*}
    c_1(s)
& =
    \Bigl(x_0+su+\mathcal{O}(s^2), -b_x\bigl(x_0+su+\mathcal{O}(s^2),z_0^\prime+s\mu^\prime+\mathcal{O}(s^2)\bigr)\bigl(\zeta_0^{\prime\prime}+s\nu^{\prime\prime}+\mathcal{O}(s^2)\bigr)\Bigr),
\\
    c_2(s)
& =
    \bigl(z_0+s\mu+\mathcal{O}(s^2),\zeta_0+s\nu+\mathcal{O}(s^2)\bigr), 
\\
    c_3(s)
& =
   \bigl(z_0+s\tilde{\mu}+\mathcal{O}(s^2),\zeta_0+s\tilde{\nu}+\mathcal{O}(s^2)\bigr),
\\
   c_4(s)
& =
    \Bigl(y_0+sv+\mathcal{O}(s^2), -\tilde{b}_y\bigl(y_0+sv+\mathcal{O}(s^2),z_0^\prime+s\tilde{\mu}^\prime+\mathcal{O}(s^2)\bigr)\bigl(\zeta_0^{\prime\prime}+s\tilde{\nu}^{\prime\prime}+\mathcal{O}(s^2)\bigr)\Bigr),
\end{align*}
near $s=0$. Taking the differentiation of the above at $s=0$, we deduce that 
\begin{align*}
    \Xi_1
& = 
    \Bigl(
    u, 
    -
    \partial_x\vert_{x=x_0}
   \bigl(b_x(x,z_0^\prime)^T\zeta_0^{\prime\prime}\bigr)u
    -
    \partial_{z^\prime}\vert_{z^\prime=z_0^\prime}\bigl(b_x(x_0,z^\prime)^T\zeta_0^{\prime\prime}\bigr)\mu^\prime
    -
    b_x(x_0,z_0^\prime)^T\nu^{\prime\prime}   
    \Bigr),
\\
    \Xi_2
& = 
   (\mu,\nu),
\\
    \Xi_3
& =
   (\tilde{\mu},\tilde{\nu}),
\\
    \Xi_4
& = 
    \Bigl(
    v, 
    -
    \partial_y\vert_{y=y_0}
   \bigl(\tilde{b}_y(y,z_0^\prime)^T\zeta_0^{\prime\prime}\bigr)v
    -
    \partial_{z^\prime}\vert_{z^\prime=z_0^\prime}\bigl(\tilde{b}_y(y_0,z^\prime)^T\zeta_0^{\prime\prime}\bigr)\tilde{\mu}^\prime
    -
    \tilde{b}_y(y_0,z_0^\prime)^T\tilde{\nu}^{\prime\prime}   
    \Bigr). 
\end{align*}
If we suppose that $\Xi \in T_{c_0}E_{k,\alpha}$ in addition, then we have $\Xi_2= (\mu,\nu)=(\tilde{\mu},\tilde{\nu})=\Xi_3$ 
since $(\Xi_2,\Xi_3)  \in T_{(c_{0,2},c_{0,2})}\bigl(\Delta(T^\ast\mathcal{G}))\bigr)$. 
This completes the proof. 
\end{proof}
%
%
%
%
%
%

\end{document}